\numberwithin{equation}{section}
\newtheorem{theorem}[equation]{Theorem}
\newtheorem*{theorem*}{Theorem} \newtheorem{lemma}[equation]{Lemma}
\newtheorem*{conjecture*}{Mamma Conjecture}
\newtheorem*{conjecture1*}{Mamma Conjecture (revisited)}
\newtheorem{proposition}[equation]{Proposition}
\newtheorem*{corollary*}{Corollary}
\theoremstyle{remark}
\newtheorem{example}[equation]{Example}
\newtheorem{notation}[equation]{Notation}
\theoremstyle{remark}
\newtheorem{remark}[equation]{Remark}
\newcommand{\cA}{{\mathcal A}}
\newcommand{\cB}{{\mathcal B}}
\newcommand{\cC}{{\mathcal C}}
\newcommand{\cD}{{\mathcal D}}
\newcommand{\cO}{{\mathcal O}}
\newcommand{\cU}{{\mathcal U}}
\newcommand{\cX}{{\mathcal X}}
\newcommand{\cZ}{{\mathcal Z}}
\newcommand{\Spt}{\mathrm{Spt}}
\newcommand{\bbA}{\mathbb{A}}
\newcommand{\bbG}{\mathbb{G}}
\newcommand{\bbP}{\mathbb{P}}
\newcommand{\bbS}{\mathbb{S}}
\newcommand{\bbQ}{\mathbb{Q}}
\newcommand{\bbZ}{\mathbb{Z}}
\DeclareMathOperator{\id}{id}
\DeclareMathOperator{\NMot}{NMot}
\DeclareMathOperator{\NNum}{NNum} 
\DeclareMathOperator{\Num}{Num} 
\DeclareMathOperator{\Fun}{Fun} 
\newcommand{\dgcat}{\mathrm{dgcat}} 
\newcommand{\perf}{\mathrm{perf}}
\newcommand{\dg}{\mathrm{dg}}
\newcommand{\Hom}{\mathrm{Hom}}
\newcommand{\Ho}{\mathrm{Ho}}
\newcommand{\Hmo}{\mathrm{Hmo}}
\newcommand{\op}{\mathrm{op}}
\newcommand{\too}{\longrightarrow}
\newcommand{\eg}{\textsl{e.g.}}
\let\oldmarginpar\marginpar
\def\marginpar#1{\oldmarginpar{\tiny #1}}
\begin{document}

\title[Noncommutative rigidity]{Noncommutative rigidity}
\author{Gon{\c c}alo~Tabuada}
\address{Gon{\c c}alo Tabuada, Department of Mathematics, MIT, Cambridge, MA 02139, USA}
\email{tabuada@math.mit.edu}
\urladdr{http://math.mit.edu/~tabuada}
\thanks{The author was partially supported by a NSF CAREER Award}

\subjclass[2010]{14A22, 14C25, 19E08, 19E15}
\date{\today}
\keywords{Algebraic cycles, $K$-theory, noncommutative algebraic geometry}
\abstract{In this article we prove that the numerical Grothendieck group of every smooth proper dg category is invariant under primary field extensions, and also that the mod-$n$ algebraic $K$-theory of every dg category is invariant under extensions of separably closed fields. As a byproduct, we obtain an extension of Suslin's rigidity theorem, as well as of Yagunov-{\O}stv{\ae}r's equivariant rigidity theorem, to singular varieties. Among other applications, we show that base-change along primary field extensions yields a faithfully flat morphism between noncommutative motivic Galois groups. Finally, along the way, we introduce the category of $n$-adic noncommutative mixed motives.}}

\maketitle
\vskip-\baselineskip
\vskip-\baselineskip


\section{Introduction}
Let $l/k$ be a field extension and $X$ an algebraic $k$-variety. On the one hand, it is (well-)known that when the field extension $l/k$ is primary\footnote{Recall that a field extension $l/k$ is called {\em primary} if the algebraic closure of $k$ in $l$ is purely inseparable over $k$. Whenever $k$ is algebraically closed, every field extension $l/k$ is primary.} and the algebraic $k$-variety $X$ is smooth and proper, base-change induces an isomorphism between the $\bbQ$-vector spaces of algebraic cycles up to numerical equivalence:
\begin{equation}\label{eq:iso-1}
(-)_l\colon  \cZ^\ast(X)_\bbQ/_{\!\!\sim \mathrm{num}} \stackrel{\simeq}{\too} \cZ^\ast(X_l)_\bbQ/_{\!\!\sim \mathrm{num}}\,.
\end{equation}
On the other hand, when $l/k$ is an extension of algebraically closed fields, a remarkable result of Suslin \cite{Suslin} asserts that, for every integer $n\geq 2$ coprime to $\mathrm{char}(k)$, base-change induces an isomorphism in mod-$n$ $G$-theory:
\begin{equation}\label{eq:isos1}
(-)_l\colon  G_\ast(X;\bbZ/n) \stackrel{\simeq}{\too} G_\ast(X_l;\bbZ/n) \,.
\end{equation}
 Among other applications, the isomorphism \eqref{eq:isos1} (with $X=\mathrm{Spec}(k)$) enabled Suslin to describe the torsion of the algebraic $K$-theory of every algebraically closed field of positive characteristic, thus solving a longstanding conjecture of Quillen-Lichtenbaum; consult Suslin's ICM address \cite{Suslin-ICM} for further applications.

The main goal of this article is to establish far-reaching noncommutative generalizations of the above rigidity isomorphisms \eqref{eq:iso-1}-\eqref{eq:isos1}; consult \S\ref{sec:applications} for applications.
\subsection*{Statement of results}
A {\em differential graded (=dg) category} $\cA$, over a base field $k$, is a category enriched over complexes of $k$-vector spaces; see \S\ref{sub:dg}. Every (dg) $k$-algebra $A$ gives naturally rise to a dg category with a single object. Another source of examples is proved by algebraic varieties (or more generally by algebraic stacks) since the category of perfect complexes $\perf(X)$, resp. the bounded derived category of coherent $\cO_X$-modules $\cD^b(\mathrm{coh}(X))$, of every algebraic $k$-variety $X$ admits a canonical dg enhancement $\perf_\dg(X)$, resp. $\cD^b_\dg(\mathrm{coh}(X))$; see \cite[\S4.6]{ICM-Keller}. Following Kontsevich \cite{Miami,finMot,IAS}, a dg category $\cA$ is called {\em smooth} if it is compact as a bimodule over itself and {\em proper} if $\sum_i \mathrm{dim}_k H^i\cA(x,y)<\infty$ for every ordered pair of objects $(x,y)$. Examples include finite dimensional $k$-algebras $A$ of finite global dimension (when $k$ is perfect) as well as the dg categories of perfect complexes $\perf_\dg(X)$ associated to smooth proper algebraic $k$-varieties.

Given a smooth proper dg category $\cA$, recall from \S\ref{sub:numerical} the definition of its numerical Grothendieck group $K_0(\cA)/_{\!\!\sim \mathrm{num}}$. In the same vein, given a dg category $\cA$ and an integer $n \geq 2$, recall from \S\ref{sub:coefficients} the definition of the mod-$n$ algebraic $K$-theory groups $K_\ast(\cA;\bbZ/n)$ as well as of its variants $K_\ast(\cA)^{\wedge}_n$, $K_\ast^{\mathrm{et}}(\cA;\bbZ/n)$, $K_\ast(\cA)/n$, and ${}_n K_\ast(\cA)$. Under these notations, our main result is the following:
\begin{theorem}[Noncommutative rigidity]\label{thm:main}
Given a field extension $l/k$, a dg $k$-linear category $\cA$, and an integer $n \geq 2$, the following holds:
\begin{itemize}
\item[(i)] When the field extension $l/k$ is primary and the dg category $\cA$ is smooth and proper, base-change induces an isomorphism:
\begin{equation}\label{eq:iso-2}
-\otimes_k l \colon K_0(\cA)_\bbQ/_{\!\!\sim \mathrm{num}} \stackrel{\simeq}{\too} K_0(\cA\otimes_k l)_\bbQ/_{\!\!\sim \mathrm{num}}\,.
\end{equation} 
The same holds integrally whenever $k$ is algebraically closed.
\item[(ii)] When $l/k$ is an extension of separably closed fields and $n$ is coprime to $\mathrm{char}(k)$, base-change induces an isomorphism:
\begin{equation}\label{eq:iso-3}
-\otimes_k l \colon K_\ast(\cA;\bbZ/n) \stackrel{\simeq}{\too} K_\ast(\cA\otimes_k l;\bbZ/n)\,.
\end{equation}
Similarly for the variants $K_\ast(-)^\wedge_n$, $K_\ast^{\mathrm{et}}(-;\bbZ/n)$, $K_\ast(-)/n$, and ${}_n K_\ast(-)$.
\end{itemize}
\end{theorem}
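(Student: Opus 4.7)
My plan proceeds in two tracks, one per part, unified by the observation that continuity of $K_0(-)_\bbQ/_{\!\!\sim\num}$ and $K(-;\bbZ/n)$ along filtered colimits of base algebras reduces both questions to specialization on smooth affine $k$-schemes.

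For part (i), injectivity of $-\otimes_k l$ is essentially formal: the noncommutative Euler form
\[
\chi(x,y)\;=\;\sum_i(-1)^i\dim_k\Hom_{\cD(\cA)}(x,y[i])
\]
satisfies $\chi(x\otimes_k l,y\otimes_k l)=\chi(x,y)$ on compact objects (base-change preserves $\Hom$'s up to $-\otimes_k l$), so a class that becomes numerically trivial after base-change was already so. For surjectivity I would first handle the case where $k$ is algebraically closed: write $l=\operatorname{colim}_\alpha R_\alpha$ with $R_\alpha$ smooth finitely generated $k$-subalgebras, lift a class $\beta\in K_0(\cA\otimes_k l)$ to $\tilde\beta\in K_0(\cA\otimes_k R_\alpha)$, specialize at a closed point $\mathfrak m\in\Spec(R_\alpha)$ (whose residue field is $k$, by algebraic closedness) to obtain $\beta_0\in K_0(\cA)$, and verify that $\beta_0\otimes_k l\equiv\beta$ modulo numerical equivalence --- a specialization-invariance property for the Euler pairing on classes over $R_\alpha$. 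To pass to arbitrary primary $l/k$, I would base-change further to the separable closure $k^{\mathrm{sep}}$: because $l/k$ is primary, $l\otimes_k k^{\mathrm{sep}}$ is a domain with fraction field $l^{\mathrm{sep}}$, and $\operatorname{Gal}(k^{\mathrm{sep}}/k)$ identifies naturally with $\operatorname{Gal}(l^{\mathrm{sep}}/l)$. Applying the separably-closed case to $l^{\mathrm{sep}}/k^{\mathrm{sep}}$ and then taking Galois invariants --- which commutes with the resulting isomorphism because both sides are finite-dimensional $\bbQ$-vector spaces whose invariants are precisely the rational versions over $k$ and $l$ --- delivers \eqref{eq:iso-2}. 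The purely inseparable tail is absorbed by the transfer $\mathrm{Tr}$, which satisfies $\mathrm{Tr}\circ(-\otimes_k k^{1/p^n})=p^n\cdot\id$ and therefore becomes invertible after $-\otimes_\bbZ\bbQ$.

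For part (ii), the crucial technical input is a Hensel rigidity theorem for dg categories, extending those of Gabber and Gillet--Thomason: for every Henselian local $k$-algebra $(R,\mathfrak m)$ with residue field $\kappa$ and every $n$ invertible in $k$,
\[
K(\cA\otimes_k R;\bbZ/n)\;\stackrel{\sim}{\longrightarrow}\;K(\cA\otimes_k\kappa;\bbZ/n).
\]
Granted this, I would choose $R\subset l$ smooth finitely generated and a closed point $\mathfrak m\in\Spec(R)$; since $R$ is smooth, $\kappa(\mathfrak m)$ is finite separable over $k$, and is therefore equal to $k$ because $k$ is separably closed. Henselizing at $\mathfrak m$ and invoking rigidity produces a retraction $K(\cA\otimes_k R;\bbZ/n)\to K(\cA;\bbZ/n)$ of the base-change map; passing to the colimit shows $K(\cA;\bbZ/n)\to K(\cA\otimes_k l;\bbZ/n)$ is split injective, and the symmetric argument (using that $l$ is also separably closed) yields split surjectivity, hence \eqref{eq:iso-3}. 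The variants $K_\ast(-)^\wedge_n$, $K^{\mathrm{et}}_\ast(-;\bbZ/n)$, $K_\ast(-)/n$, and ${}_nK_\ast(-)$ follow from the mod-$n$ statement via standard universal-coefficient long exact sequences.

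The principal obstacle I expect is the dg-categorical Hensel rigidity underlying part (ii): Gabber's scheme-theoretic proof leans on concrete affine geometry and must here be re-engineered universally on dg categories --- presumably the role of the paper's announced category of $n$-adic noncommutative mixed motives. On the part (i) side, the matching subtlety is the specialization-invariance of numerical equivalence over $R_\alpha$, which amounts to a compatibility between the Euler pairing and the transfer structure on $K_0$ of smooth $k$-schemes.
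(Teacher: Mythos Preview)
Your outline for part (i) tracks the paper's proof closely: compatibility of $\chi$ with base-change for injectivity, specialization at rational points over finitely generated $k$-subalgebras for surjectivity when $k$ is algebraically closed, purely inseparable descent via transfer, and Galois descent to reach arbitrary primary extensions. Two corrections are in order. First, for a regular extension $l/k$ the restriction map $\mathrm{Gal}(l_{\mathrm{sep}}/l)\to\mathrm{Gal}(k_{\mathrm{sep}}/k)$ is a \emph{surjection}, not an identification (think $l=k(t)$); fortunately a surjection is exactly what is needed to match invariants. Second, the assertion that Galois invariants of $K_0(\cA\otimes_k k_{\mathrm{sep}})_\bbQ/_{\!\!\sim\mathrm{num}}$ recover $K_0(\cA)_\bbQ/_{\!\!\sim\mathrm{num}}$ is not automatic and requires an argument (the paper proves it by showing $\mathrm{res}(-)\otimes_kl=\sum_{\sigma}\sigma(-)$ on the numerical quotient).

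For part (ii) your route genuinely diverges from the paper, and there is a gap. The paper does \emph{not} invoke Hensel rigidity for dg categories; instead it reduces to $l/k$ algebraically closed (via purely inseparable descent) and then runs Suslin's original curve argument: for a smooth affine $l$-curve $C=\Spec(R)$, the map $\mathrm{Div}(C)\to\Hom(K_\ast(\cA\otimes_k R;\bbZ/n),K_\ast(\cA\otimes_k l;\bbZ/n))$ factors through $\mathrm{Pic}(\overline{C},C_\infty)/n^2$ by $\bbA^1$-homotopy invariance of $K(-;\bbZ/n)$, and then $n$-divisibility of the Rosenlicht Jacobian forces any two rational points to specialize identically. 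This is precisely what you need to show $(\alpha_{i_o}\otimes^{\bf L}_{k_{i_o}}k)\otimes_kl=\alpha$, i.e.\ surjectivity. Your ``symmetric argument (using that $l$ is also separably closed)'' does not supply this: split injectivity of $-\otimes_kl$ gives you a retraction, but there is no evident reason the retraction is a section, and reversing the roles of $k$ and $l$ makes no sense since $k$ is not an $l$-algebra. Even granting Hensel rigidity for $\cA\otimes_k(-)$, it only identifies $K(\cA\otimes_kR^h_{\mathfrak m};\bbZ/n)$ with $K(\cA;\bbZ/n)$ for each individual $\mathfrak m$; to conclude surjectivity you must still compare specializations at two different closed points of $\Spec(R\otimes_kl)$, which brings you back to a Suslin-type statement. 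The paper's approach thus replaces your unproved Hensel input by the known $\bbA^1$-invariance of mod-$n$ $K$-theory, and closes the surjectivity gap directly.
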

\begin{remark}[Euler pairing]
As explained in \S\ref{sub:numerical}, the numerical Grothendieck group $K_0(\cA)/_{\!\!\sim \mathrm{num}}$ comes equipped with a non-degenerate Euler bilinear pairing $\chi$. Since base-change preserves the Euler bilinear pairing, the above rigidity isomorphism \eqref{eq:iso-2} holds moreover for all those invariants which can be extracted from the pair $(K_0(\cA)/_{\!\!\sim \mathrm{num}}, \chi)$. Among others, these invariants include the Neron-Severi group $\mathrm{NS}(\cA)$ of surface-like dg categories $\cA$ in the sense of Kuznetsov \cite[\S3]{Kuznetsov}.
\end{remark}
To the best of the author's knowledge, Theorem \ref{thm:main} is new in the literature. In what follows, we illustrate its strength via several examples:
\begin{example}[Algebraic cycles up to numerical equivalence]
Let $X$ be a smooth proper algebraic $k$-variety. Thanks to the Hirzebruch-Riemann-Roch theorem, by applying Theorem \ref{thm:main}(i) to the dg category $\cA=\perf_\dg(X)$, we recover the original rigidity isomorphism \eqref{eq:iso-1}. 
\end{example}
\begin{example}[$G$-theory]
Let $X$ be an algebraic $k$-variety and $l/k$ an extension of algebraically closed fields. By applying Theorem \ref{thm:main}(ii) to the dg category $\cA=\cD^b_\dg(\mathrm{coh}(X))$, we recover Suslin's original rigidity isomorphism~\eqref{eq:isos1}.
\end{example}
\begin{example}[Algebraic $K$-theory]
Let $X$ be an algebraic $k$-variety. By applying Theorem \ref{thm:main}(ii) to the dg category $\cA=\perf_\dg(X)$, we obtain the isomorphism:
\begin{equation}\label{eq:isos3}
(-)_l\colon  K_\ast(X;\bbZ/n) \stackrel{\simeq}{\too} K_\ast(X_l;\bbZ/n) \,.
\end{equation}
It is well-known that $G$-theory agrees with algebraic $K$-theory for nonsingular algebraic varieties. Therefore, in the particular case of an extension $l/k$ of algebraically closed fields, the isomorphism \eqref{eq:isos3} may be understood as the extension of Suslin's rigidity isomorphism \eqref{eq:isos1} to the case of singular algebraic varieties.
\end{example}
\begin{example}[Algebraic stacks]\label{ex:1}
Let $\cX$ be an algebraic $k$-stack. By applying Theorem \ref{thm:main}(ii) to the dg category $\cA=\perf_\dg(\cX)$, we obtain the isomorphism:
\begin{equation}\label{eq:iso-stack}
(-)_l\colon  K_\ast(\cX;\bbZ/n) \stackrel{\simeq}{\too} K_\ast(\cX_l;\bbZ/n) \,.
\end{equation}
Similarly for all the other variants and also for mod-$n$ $G$-theory.
\end{example}
\begin{example}[Equivariant algebraic $K$-theory]\label{ex:2}
Let $G$ be an algebraic group acting on an algebraic $k$-variety $X$. In the particular case where $\cX$ is the global orbifold $[X/G]$, \eqref{eq:iso-stack} reduces to the isomorphism in equivariant algebraic $K$-theory:
\begin{equation}\label{eq:iso-verylast}
(-)_l\colon K^G_\ast(X;\bbZ/n) \stackrel{\simeq}{\too} K^G_\ast(X_l;\bbZ/n) \,.
\end{equation}
When $X$ is smooth and $l/k$ is an extension of algebraically closed fields, this isomorphism was originally established by Yagunov-{\O}stv{\ae}r in \cite{YO}. The rigidity isomorphism \eqref{eq:iso-verylast} holds similarly for all the other variants and also for mod-$n$ $G$-theory.
\end{example}
\begin{example}[Twisted algebraic $K$-theory]\label{ex:3}
Let $X$ be an algebraic $k$-variety and $[\alpha] \in H^2_{\mathrm{et}}(X;\bbG_m)$ a (not necessarily torsion) \'etale cohomology class. By applying Theorem \ref{thm:main}(ii) to the dg category of $\alpha$-twisted perfect complexes $\cA=\perf_\dg(X;\alpha)$, we obtain the isomorphism in twisted algebraic $K$-theory
$$
(-)_l\colon K^\alpha_\ast(X;\bbZ/n) \stackrel{\simeq}{\too} K^{\mathrm{res}(\alpha)}_\ast(X_l;\bbZ/n) \,,
$$
where $\mathrm{res}\colon H^2_{\mathrm{et}}(X;\bbG_m) \to H^2_{\mathrm{et}}(X_l;\bbG_m)$ stands for the restriction homomorphism. Similarly for all the other variants and also for mod-$n$ $G$-theory.
\end{example}
\section{Applications}\label{sec:applications}
\subsection*{Noncommutative motivic rigidity}
Recall from \cite[\S4.6]{book} the construction of the category of noncommutative numerical motives $\NNum(k)$, and from \cite[\S4]{Hopf} the construction of the triangulated category of noncommutative mixed motives $\mathrm{NMix}(k;\bbZ/n)$ (with $\bbZ/n$-coefficients). In the same vein, recall from \S\ref{sec:adic} the construction of the category of $n$-adic noncommutative mixed motives $\mathrm{NMix}(k)^\wedge_n$.
\begin{theorem}\label{thm:NCmotivic}
\begin{itemize}
\item[(i)] Given a primary field extension $l/k$, the base-change functor $-\otimes_k l\colon \NNum(k)_\bbQ \to \NNum(l)_\bbQ$ is fully-faithful. The same holds integrally whenever $k$ is algebraically closed.  
\item[(ii)] Given an extension $l/k$ of separably closed fields and an integer $n\geq 2$ coprime to $\mathrm{char}(k)$, the following base-change functors are fully-faithful:
$$
-\otimes_kl \colon \mathrm{NMix}(k)^\wedge_n \too \mathrm{NMix}(l)^\wedge_n \quad -\otimes_k l\colon \mathrm{NMix}(k;\bbZ/n) \too \mathrm{NMix}(l;\bbZ/n)\,.
$$
\end{itemize}
\end{theorem}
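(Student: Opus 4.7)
The plan is to reduce each full-faithfulness assertion to the corresponding rigidity isomorphism of Theorem~\ref{thm:main} applied to the ``internal-Hom'' dg category $\cA^{op}\otimes_k\cB$ associated with a pair of smooth proper generators $\cA,\cB$.

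\textbf{Step 1: Identification of mapping objects.} Each of the categories $\NNum(k)_\bbQ$, $\mathrm{NMix}(k;\bbZ/n)$, and $\mathrm{NMix}(k)^\wedge_n$ is constructed (cf.\ \cite[\S4.6]{book}, \cite[\S4]{Hopf}, and \S\ref{sec:adic}) so that morphisms between the motives $U(\cA)$ and $U(\cB)$ of smooth proper dg categories are computed by $K$-theoretic invariants of $\cA^{op}\otimes_k\cB$: namely $K_0(\cA^{op}\otimes_k\cB)_\bbQ/_{\!\!\sim \mathrm{num}}$ in the numerical case, and the mod-$n$, resp.\ $n$-adic completed, algebraic $K$-theory spectrum of $\cA^{op}\otimes_k\cB$ in the mixed-motivic cases.

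\textbf{Step 2: Compatibility with base-change.} Extension of scalars $-\otimes_k l$ is symmetric monoidal, hence there is a natural equivalence $(\cA^{op}\otimes_k\cB)\otimes_k l \simeq (\cA\otimes_k l)^{op}\otimes_l(\cB\otimes_k l)$ in the homotopy category of dg categories. Under the identifications of Step~1, the map induced on Hom-groups (resp.\ on mapping spectra) by the base-change functor coincides with the map that Theorem~\ref{thm:main} addresses. Since tensor products of smooth proper dg categories remain smooth and proper, the hypothesis of Theorem~\ref{thm:main}(i) is satisfied in the numerical setting.

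\textbf{Step 3: Propagation to all objects.} Theorem~\ref{thm:main}(i), resp.\ Theorem~\ref{thm:main}(ii), now directly yields the required isomorphism on Hom-groups. This establishes full-faithfulness of the base-change functor on the full subcategory spanned by the motives of smooth proper dg categories. Full-faithfulness then propagates to the entire ambient category because the latter is the idempotent-completion (in the numerical case) or the localizing triangulated hull (in the mixed-motivic cases) of this subcategory, and base-change preserves direct sums, idempotent splittings, and distinguished triangles.

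The main obstacle lies in Step~1: one must carefully unwind the construction of the $n$-adic category $\mathrm{NMix}(k)^\wedge_n$ introduced in \S\ref{sec:adic} and verify that its mapping spectra are genuinely computed by $K(\cA^{op}\otimes_k\cB)^\wedge_n$, so that precisely the variant $K_\ast(-)^\wedge_n$ appearing in Theorem~\ref{thm:main}(ii) becomes applicable. Similarly, one must confirm that the mapping spectra in $\mathrm{NMix}(k;\bbZ/n)$ really do compute mod-$n$ $K$-theory of the internal Hom. Once these identifications are in place, the proof is essentially formal and the rigidity theorem does all the substantive work.
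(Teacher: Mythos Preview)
Your proposal is correct and follows essentially the same approach as the paper: identify Hom-objects between motives of smooth proper dg categories with the appropriate $K$-theoretic invariant of $\cA^{\op}\otimes_k\cB$, apply Theorem~\ref{thm:main}, and propagate to the whole category. Two minor clarifications: the categories $\mathrm{NMix}(k)^\wedge_n$ and $\mathrm{NMix}(k;\bbZ/n)$ are \emph{thick} (not localizing) triangulated subcategories generated by the $\mathrm{U}(\cA)$'s, which is all you need for Step~3; and the ``obstacle'' you flag in Step~1 for the $n$-adic case is exactly what the paper isolates as a separate proposition in \S\ref{sec:adic}, so your instinct that this is the one place requiring a real computation is spot on.
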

On the one hand, the commutative counterpart of item (i) was established by Kahn in \cite[Prop.~5.5]{Kahn}; consult Remark \ref{rk:alternative} below for an alternative proof. On the other hand, in the particular case where $l/k$ is an extension of algebraically closed fields, the commutative counterpart of item (ii) was established by R\"ondigs-{\O}stv{\ae}r in \cite[Thm.~1.1]{RO} ($n$-adic case) and by Haesemeyer-Hornbostel in \cite[Thm.~30]{HH} ($\bbZ/n$-coefficients case).
\subsection*{Noncommutative motivic Galois groups}
Recall from \cite[\S6]{book} the definition of the (conditional) noncommutative motivic Galois group $\mathrm{Gal}(\NNum^\dagger(k)_\bbQ)$. By combining Theorem \ref{thm:NCmotivic}(i) with the Tannakian formalism, we obtain the result:
\begin{theorem}\label{thm:Galois}
Given a primary field extension $l/k$, the induced base-change functor $-\otimes_k l \colon \NNum^\dagger(k)_\bbQ \to \NNum^\dagger(l)_\bbQ$ gives rise to a faithfully flat morphism of affine group schemes $\mathrm{Gal}(\NNum^\dagger(l)_\bbQ) \to \mathrm{Gal}(\NNum^\dagger(k)_\bbQ)$.
\end{theorem}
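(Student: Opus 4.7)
The plan is to invoke the standard Tannakian reconstruction dictionary. Specifically, by a well-known criterion of Deligne--Milne, a $\bbQ$-linear $\otimes$-functor $F\colon \cT \to \cT'$ between neutral Tannakian categories (compatible with a fixed pair of fiber functors) induces a faithfully flat morphism on Tannakian fundamental groups if and only if $F$ is fully faithful \emph{and} every subobject of an object of the form $F(M)$ is isomorphic to $F(M')$ for some subobject $M' \hookrightarrow M$ in $\cT$. Applied to the base-change functor $-\otimes_k l$, this reduces Theorem~\ref{thm:Galois} to verifying these two conditions for $\cT = \NNum^\dagger(k)_\bbQ$ and $\cT' = \NNum^\dagger(l)_\bbQ$.

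First, full faithfulness of $-\otimes_k l\colon \NNum(k)_\bbQ \to \NNum(l)_\bbQ$ is precisely the content of Theorem~\ref{thm:NCmotivic}(i). Since the $\dagger$-modification alters only the symmetry constraint, leaving the underlying $\bbQ$-linear additive category, the hom-sets, and the tensor product unchanged, full faithfulness is automatically inherited by the induced functor $-\otimes_k l\colon \NNum^\dagger(k)_\bbQ \to \NNum^\dagger(l)_\bbQ$.

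Second, I would establish the subobject-lifting condition by a standard splitting-of-idempotents argument. The key input is that $\NNum^\dagger(l)_\bbQ$ is abelian semisimple (as recalled in \cite[\S6]{book}), so every subobject of $F(M)$ is a direct summand and corresponds to an idempotent $e \in \End(F(M))$. By fullness, $e$ is the image of some endomorphism of $M$, which is itself idempotent by faithfulness. The category $\NNum^\dagger(k)_\bbQ$ being idempotent complete, this endomorphism splits and yields a direct summand $M' \hookrightarrow M$ with $F(M')$ isomorphic to the chosen subobject. This checks the Deligne--Milne criterion and concludes the argument.

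The main---and rather minor---subtlety is to verify that the fiber functors on $\NNum^\dagger(k)_\bbQ$ and $\NNum^\dagger(l)_\bbQ$ can be chosen compatibly under base-change, so that the Tannakian criterion applies literally as stated. This is a routine matter of unwinding the construction of the fiber functors used to make the categories neutral Tannakian.
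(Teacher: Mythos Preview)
Your proposal is correct and is exactly the approach the paper takes: the paper's entire ``proof'' is the single sentence ``By combining Theorem~\ref{thm:NCmotivic}(i) with the Tannakian formalism, we obtain the result,'' and you have simply unpacked what ``the Tannakian formalism'' means here (the Deligne--Milne criterion, with the subobject condition handled via semisimplicity and idempotent lifting). There is nothing to add.
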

Intuitively speaking, Theorem \ref{thm:Galois} shows that every ``$\otimes$-symmetry'' of the category of noncommutative numerical $k$-linear motives can be extended to a ``$\otimes$-symmetry'' of the category of noncommutative $l$-linear motives. In the particular case of an extension of algebraically closed fields $l/k$, the commutative counterpart of Theorem \ref{thm:Galois} was established by Deligne-Milne in \cite[Prop.~6.22(b)]{DM}.
\subsection*{Extra functoriality}
Our last application shows that the theory of noncommutative numerical motives is equipped with an extra functoriality:
\begin{theorem}\label{thm:trace}
Given a primary field extension $l/k$, with $\mathrm{char}(k)=0$, the base-change functor $-\otimes_k l\colon \NNum(k)_\bbQ \to \NNum(l)_\bbQ$ admits a left=right adjoint.
\end{theorem}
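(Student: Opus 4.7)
The plan is to combine the full faithfulness established in Theorem \ref{thm:NCmotivic}(i) with the semisimplicity of the categories of noncommutative numerical motives. Recall from the noncommutative analogue of Jannsen's semisimplicity theorem that both $\NNum(k)_\bbQ$ and $\NNum(l)_\bbQ$ are $\bbQ$-linear abelian semisimple categories with finite-dimensional morphism spaces. By Theorem \ref{thm:NCmotivic}(i), the base-change functor $F := -\otimes_k l$ is $\bbQ$-linear and fully faithful; it therefore induces an equivalence between $\NNum(k)_\bbQ$ and its essential image $\cI \subseteq \NNum(l)_\bbQ$, and sends (isomorphism classes of) simple objects to (isomorphism classes of) simple objects.

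Under these conditions, every object $Y \in \NNum(l)_\bbQ$ admits a canonical decomposition $Y \simeq Y_\cI \oplus Y_{\cI^\perp}$, in which $Y_\cI$ is the maximal summand lying (up to isomorphism) in $\cI$, and $Y_{\cI^\perp}$ has no simple constituent belonging to $\cI$. The assignment $Y \mapsto Y_\cI$, composed with the inverse of $F\colon \NNum(k)_\bbQ \isotoo \cI$, produces a $\bbQ$-linear functor $G\colon \NNum(l)_\bbQ \to \NNum(k)_\bbQ$, which is functorial by semisimplicity and the uniqueness (up to unique isomorphism) of the isotypic decomposition.

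The key verification is that $G$ is simultaneously a left and a right adjoint to $F$. This follows from Schur's lemma: for every $X \in \NNum(k)_\bbQ$ and $Y \in \NNum(l)_\bbQ$, the vanishings $\Hom(F(X), Y_{\cI^\perp}) = 0 = \Hom(Y_{\cI^\perp}, F(X))$ (since $F(X) \in \cI$ shares no simple constituent with $Y_{\cI^\perp}$) yield natural isomorphisms
\begin{equation*}
\Hom(F(X), Y) \simeq \Hom(X, G(Y)) \quad \text{and} \quad \Hom(Y, F(X)) \simeq \Hom(G(Y), X)\,,
\end{equation*}
exhibiting $G$ as both a right and a left adjoint to $F$.

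The principal obstacle is the honest verification of semisimplicity for $\NNum(l)_\bbQ$ over the (possibly highly transcendental) extension $l$; this is where the characteristic-zero hypothesis enters, ensuring that the noncommutative Jannsen-type semisimplicity theorem applies on both sides of the base-change functor and that every object decomposes as a finite direct sum of simples. Once these inputs are granted, the existence of the left=right adjoint is a formal consequence of semisimplicity together with the fully-faithful embedding provided by Theorem \ref{thm:NCmotivic}(i).
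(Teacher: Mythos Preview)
Your proof is correct and follows the same strategy as the paper: invoke the abelian semisimplicity of $\NNum(k)_\bbQ$ and $\NNum(l)_\bbQ$ in characteristic zero, then combine it with the full faithfulness of base-change from Theorem~\ref{thm:NCmotivic}(i). The only difference is cosmetic: the paper cites Kahn's general \cite[Prop.~5.3]{Kahn} (a fully faithful functor between abelian semisimple categories admits a left=right adjoint) as a black box, whereas you unpack its proof explicitly via the isotypic decomposition and Schur's lemma.
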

\begin{proof}
As proved in \cite[Thm.~4.27]{book}, since $\mathrm{char}(k)=0$, the categories $\NNum(k)_\bbQ$ and $\NNum(l)_\bbQ$ are abelian semi-simple. Therefore, the proof follows from the combination of Theorem \ref{thm:NCmotivic}(i) with the general \cite[Prop.~5.3]{Kahn}.
\end{proof}
Roughly speaking, the left=right adjoint functor extracts from each noncommutative numerical $l$-linear motive its largest $k$-linear submotive. The commutative counterpart of Theorem \ref{thm:trace} was established~by~Kahn~in~\cite[Thm.~5.6]{Kahn}.
\begin{remark}\label{rk:false}
Theorem \ref{thm:trace} is {\em false} without the assumption that the field extension $l/k$ is primary. For example, as we explain below, whenever $l=k_{\mathrm{sep}}$ and the field extension $l/k$ is infinite, the functor $-\otimes_k l\colon \NNum(k)_\bbQ \to \NNum(l)_\bbQ$ does {\em not} admits a left adjoint {\em neither} a right adjoint.

By construction, the category of noncommutative numerical motives is equipped with a functor $U(-)_\bbQ\colon \dgcat_{\mathrm{sp}}(k)\to \NNum(k)_\bbQ$ defined on smooth proper dg categories. Let $\mathrm{AM}(k)_\bbQ$ be the idempotent completion of the full subcategory of $\NNum(k)_\bbQ$ consisting of the objects $U(A)_\bbQ$ with $A$ a commutative separable $k$-algebra. As proved in \cite[Prop.~2.3]{Separable}, $\mathrm{AM}(k)_\bbQ$ is equivalent to the classical category of Artin motives. In particular, this latter category is abelian semi-simple. Making use once again of the general \cite[Prop.~5.3]{Kahn}, we hence conclude that the inclusion of categories $\mathrm{AM}(k)_\bbQ \subset \NNum(k)_\bbQ$ admits a left=right adjoint.

Now, let us assume by absurd that the above base-change functor $-\otimes_k l$ admits a left adjoint. This would imply the existence of an Artin motive $N\!\!M$ such that
\begin{equation}\label{eq:equalities}
\mathrm{dim}\Hom_{\mathrm{AM}(k)_\bbQ}(N\!\!M, U(l')_\bbQ) = \mathrm{dim} \Hom_{\NNum(l)_\bbQ}(U(l)_\bbQ, U(l')_\bbQ \otimes_k l) = [l':k]
\end{equation}
for every finite separable field extension $l'/k$. By definition, $N\!\!M$ is a direct summand of $U(l_1 \times \cdots \times l_m)_\bbQ$, where $l_1, \ldots, l_m$ are finite separable field extensions over $k$. Therefore, whenever $l'$ contains $l_1, \ldots, l_m$, we would conclude that the left-hand side of \eqref{eq:equalities} is $\leq [l_1:k] + \cdots + [l_m:k]$. This is a contradiction because, since the extension $l/k$ is infinite, the degree $[l':k]$ can be arbitrarily high. A similar argument shows that the base-change functor $-\otimes_k l$ does not admits a~right~adjoint. 
\end{remark}


\section{Preliminaries}
\subsection{Dg categories}\label{sub:dg}
Let $k$ be a commutative ring and $\cC(k)$ the category of complexes of 
$k$-modules. A {\em differential
  graded (=dg) category $\cA$} is a category enriched over $\cC(k)$
and a {\em dg functor} $F\colon\cA\to \cB$ is a functor enriched over
$\cC(k)$; consult Keller's ICM survey
\cite{ICM-Keller}. We write $\dgcat(k)$ for the category of dg categories.

Let $\cA$ be a dg category. The opposite dg category $\cA^\op$ has the
same objects and $\cA^\op(x,y):=\cA(y,x)$. A {\em right dg
  $\cA$-module} is a dg functor $M\colon \cA^\op \to \cC_\dg(k)$ with values
in the dg category $\cC_\dg(k)$ of complexes of $k$-modules. Let
us write $\cC(\cA)$ for the category of right dg
$\cA$-modules. Following \cite[\S3.2]{ICM-Keller}, the derived
category $\cD(\cA)$ of $\cA$ is defined as the localization of
$\cC(\cA)$ with respect to the objectwise quasi-isomorphisms. Let
$\cD_c(\cA)$ be the triangulated subcategory of compact objects.

A dg functor $F\colon\cA\to \cB$ is called a {\em Morita equivalence} if it induces an equivalence on derived categories $\cD(\cA) \simeq
\cD(\cB)$; see \cite[\S4.6]{ICM-Keller}. As explained in
\cite[\S1.6]{book}, the category $\dgcat(k)$ admits a Quillen model
structure whose weak equivalences are the Morita equivalences. Let us
denote by $\Hmo(k)$ the associated homotopy category.

The {\em tensor product $\cA\otimes\cB$} of dg categories is defined
as follows: the set of objects is the cartesian product and
$(\cA\otimes\cB)((x,w),(y,z)):= \cA(x,y) \otimes\cB(w,z)$. As
explained in \cite[\S2.3]{ICM-Keller}, this construction gives rise to
a symmetric monoidal structure $-\otimes -$ on $\dgcat(k)$, which descends $-\otimes^{\bf L}-$ to the homotopy category
$\Hmo(k)$. 
\subsection{Numerical Grothendieck group}\label{sub:numerical}
Let $k$ be a field. Given a proper dg $k$-linear category $\cA$, its Grothendieck group $K_0(\cA):=K_0(\cD_c(\cA))$ comes equipped with the Euler bilinear pairing $\chi \colon  K_0(\cA) \times K_0(\cA) \to \bbZ$ defined as follows:
\begin{equation}\label{eq:pairing}
([M],[N]) \mapsto \sum_j (-1)^j \mathrm{dim}_k \Hom_{\cD_c(\cA)}(M,N[j])\,.
\end{equation}
This bilinear pairing is, in general, not symmetric neither skew-symmetric. Nevertheless, when $\cA$ is moreover smooth, the associated left and right kernels of $\chi$ agree; see \cite[Prop.~4.24]{book}. Consequently, under these assumptions on $\cA$, we have a well-defined {\em numerical Grothendieck group} $K_0(\cA)/_{\!\!\sim \mathrm{num}}:=K_0(\cA)/\mathrm{Ker}(\chi)$; similarly, consider the $\bbQ$-vector space $K_0(\cA)_\bbQ/_{\!\!\sim \mathrm{num}}$. Note that $K_0(\cA)/_{\!\!\sim \mathrm{num}}$ is torsion-free and that $\chi$ induces a {\em non-degenerate} bilinear pairing on this latter group. When $\mathrm{char}(k)=0$, this latter group is known to be finitely generated;~see~\cite[Thm.~1.2]{Separable}.
\begin{remark}[Generalization]\label{rk:generalization}
Let $k$ be a connected\footnote{Recall that a commutative ring $k$ is called {\em connected} if $\mathrm{Spec}(k)$ is a connected topological space or, equivalently, if $k$ does not contains non-trivial idempotent elements.} commutative ring (\eg\ a field) and $\cA$ a dg $k$-linear category such that $\sum_i \mathrm{rank}_k H^i\cA(x,y)<~\infty$ for every ordered pair of objects $(x,y)$. By replacing $\mathrm{dim}_k\Hom_{\cD_c(\cA)}(M,N[j])$ with $\mathrm{rank}_k\Hom_{\cD_c(\cA)}(M,N[j])$ in the above definition \eqref{eq:pairing}, we obtain a natural generalization of the Euler bilinear pairing $\chi\colon K_0(\cA) \times K_0(\cA) \to \bbZ$.
\end{remark}
\subsection{Mod-$n$ algebraic $K$-theory}\label{sub:coefficients}
Recall from \cite[\S2.2.4]{book} the construction of nonconnective algebraic $K$-theory $K\colon \dgcat(k) \to \Ho(\Spt)$, with values in the homotopy category of (symmetric) spectra. Given an integer $n\geq 2$, consider the triangle $\bbS \stackrel{n\cdot \id}{\to} \bbS \to \bbS/n \to \Sigma \bbS$, where $\bbS$ stands for the sphere spectrum. Following Browder \cite{Browder} (and Karoubi), mod-$n$ algebraic $K$-theory~is~defined~as~follows:
\begin{eqnarray}\label{eq:K-coefficients}
K(-;\bbZ/n)\colon \dgcat(k) \too \Ho(\Spt) && \cA \mapsto K(\cA) \wedge \bbS/n\,.
\end{eqnarray}
In addition to mod-$n$ algebraic $K$-theory, we can also consider $n$-adic algebraic $K$-theory $K(\cA)^\wedge_n:=\mathrm{holim}_\nu K(\cA; \bbZ/n^\nu)$, mod-$n$ \'etale $K$-theory $K^{\mathrm{et}}(\cA;\bbZ/n):=L_{K(1)}K(\cA;\bbZ/n)$ (see \cite[\S2.2.6]{book}), and the groups $K_\ast(\cA)/n$ and ${}_n K_\ast(\cA)$.
\section{Proof of Theorem \ref{thm:main}}
\subsection*{Proof of item (i)}
We start by describing the behavior of the numerical Grothendieck group with respect to some field extensions:
\begin{lemma}\label{lem:injectivity}
Given a field extension $l/k$ and a smooth proper dg $k$-linear category $\cA$, we have the following commutative diagram:
$$
\xymatrix{
K_0(\cA\otimes_kl)_\bbQ \times K_0(\cA\otimes_kl)_\bbQ \ar[r]^-\chi & \bbQ \\
K_0(\cA)_\bbQ \times K_0(\cA)_\bbQ \ar[r]_-{\chi} \ar[u]^-{-\otimes_kl} & \bbQ \ar@{=}[u]\,.
}
$$
\end{lemma}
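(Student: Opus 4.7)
The plan is to reduce the commutativity of the diagram to a pointwise identity: for any compact objects $M, N \in \cD_c(\cA)$ and every integer $j$,
$$\dim_l \Hom_{\cD_c(\cA\otimes_k l)}(M\otimes_k l,\, (N\otimes_k l)[j]) = \dim_k \Hom_{\cD_c(\cA)}(M,\, N[j]).$$
Summing with alternating signs over $j$ (only finitely many terms contribute, thanks to smoothness and properness of $\cA$) would then give $\chi([M]\otimes_k l,\, [N]\otimes_k l) = \chi([M],[N])$ on a set of generators of $K_0(\cA)$, which by bilinearity yields commutativity of the square.

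The core step is a base-change formula for $\RHom$: I would produce a natural equivalence in $\cD(l)$ of the form
$$\RHom_\cA(M, N) \otimes_k^{\bf L} l \;\simeq\; \RHom_{\cA \otimes_k l}(M\otimes_k l,\, N\otimes_k l).$$
Both sides are $k$-linear, triangulated in each variable, and preserve (infinite) direct sums. Since the compact objects of $\cD(\cA)$ are generated, as a thick subcategory, by shifts of representable modules, it suffices to check the equivalence when $M$ and $N$ are representable. There the claim is immediate from the definition of the tensor product of dg categories, which gives $(\cA\otimes_k l)((x,1),(y,1)) = \cA(x,y) \otimes_k l$ on morphism complexes.

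Granted this equivalence, passing to $H^j$ and using flatness of $l/k$ produces
$$H^j\RHom_{\cA\otimes_k l}(M\otimes_k l,\, N\otimes_k l) \;\cong\; H^j\RHom_\cA(M,N) \otimes_k l.$$
Since $\cA$ is smooth and proper, $\RHom_\cA(M,N)$ is a perfect complex of $k$-modules, so each cohomology group is a finite-dimensional $k$-vector space; its $l$-dimension after base change equals its $k$-dimension. The pointwise identity above follows, which is what we needed.

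The main (and essentially only) obstacle is verifying the base-change formula for $\RHom$ on compact objects. Once this is in place, the rest is routine linear algebra together with the standard reduction to representables. This step is a well-documented feature of the theory of dg categories (see Keller's survey \cite{ICM-Keller}) and requires no extra hypotheses beyond smoothness and properness of $\cA$, which we have.
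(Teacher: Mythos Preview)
Your proposal is correct and follows essentially the same approach as the paper: both argue that $\Hom_{\cD_c(\cA)}(M,N[j]) \otimes_k l \simeq \Hom_{\cD_c(\cA\otimes_k l)}(M\otimes_k l,(N\otimes_k l)[j])$, deduce $\chi([M\otimes_k l],[N\otimes_k l])=\chi([M],[N])$, and then extend by linearity since the classes $[M]$ generate $K_0(\cA)_\bbQ$. The only difference is that the paper simply asserts the base-change isomorphism, whereas you supply the standard justification (reduce to representables via thick subcategory, then use flatness of $l$ over $k$); this is added detail rather than a different strategy.
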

\begin{proof}
Given right dg $\cA$-modules $M, N \in \cD_c(\cA)$, we have the following natural isomorphisms of $l$-vector spaces
\begin{eqnarray*}
\Hom_{\cD_c(\cA)}(M,N[j]) \otimes_kl\simeq\Hom_{\cD_c(\cA\otimes_k l)}(M\otimes_kl, (N \otimes_kl)[j]) && j \in \bbZ\,.
\end{eqnarray*}
This implies that $\chi([M\otimes_k l], [N \otimes_k l])=\chi([M],[N])$. Consequently, the proof follows from the fact that the $\bbQ$-vector space $K_0(\cA)_\bbQ$ is generated by the elements $a_1 [M_1] + \cdots + a_n [M_n]$ with $a_1, \ldots, a_n \in \bbQ$ and $M_1, \ldots, M_n \in \cD_c(\cA)$.
\end{proof}
\begin{remark}[Generalization]\label{rk:generalization1}
Let $k\to l$ be an homomorphism between connected commutative rings. Given a dg $k$-linear category $\cA$ such that $\sum_i \mathrm{rank}_k H^i\cA(x,y)<\infty$ for every ordered pair $(x,y)$ (see Remark \ref{rk:generalization}), a proof similar to the one of Lemma \ref{lem:injectivity} yields the following commutative diagram:
\begin{equation}\label{eq:square-key}
\xymatrix{
K_0(\cA\otimes^{\bf L}_kl)_\bbQ \times K_0(\cA\otimes^{\bf L}_kl)_\bbQ \ar[r]^-\chi & \bbQ \\
K_0(\cA)_\bbQ \times K_0(\cA)_\bbQ \ar[r]_-{\chi} \ar[u]^-{-\otimes^{\bf L}_kl} & \bbQ \ar@{=}[u]\,.
}
\end{equation}
\end{remark}
\begin{lemma}\label{lem:induced}
Let $l/k$ be a field extension and $\cA$ a smooth proper dg $k$-linear category. Whenever the field extension $l/k$ is algebraic or the field $k$ is algebraically closed, base-change induces an injective homomorphism:
\begin{equation}\label{eq:induced-22}
-\otimes_k l \colon K_0(\cA)_\bbQ/_{\!\!\sim \mathrm{num}} \too K_0(\cA\otimes_k l)_\bbQ/_{\!\!\sim \mathrm{num}}\,.
\end{equation}
\end{lemma}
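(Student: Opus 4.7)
The plan is to prove simultaneously that \eqref{eq:induced-22} is well-defined and injective by showing that the kernel of the composition
$$
\psi\colon K_0(\cA)_\bbQ \xrightarrow{-\otimes_k l} K_0(\cA\otimes_k l)_\bbQ \twoheadrightarrow K_0(\cA\otimes_k l)_\bbQ/_{\!\!\sim\mathrm{num}}
$$
is exactly $\ker(\chi_\cA)$.

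The inclusion $\ker(\psi) \subseteq \ker(\chi_\cA)$ is a formal consequence of Lemma \ref{lem:injectivity}: if $\alpha\otimes_k l$ lies in the kernel of $\chi_{\cA\otimes_k l}$, then for every $\beta \in K_0(\cA)_\bbQ$ we have $\chi_\cA(\alpha,\beta) = \chi_{\cA\otimes_k l}(\alpha\otimes_k l,\beta\otimes_k l) = 0$, so $\alpha \in \ker(\chi_\cA)$. The content of the lemma lies in the opposite inclusion, which I would handle separately according to the two hypotheses.

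If $l/k$ is algebraic, write $l$ as a filtered colimit of its finite subextensions $l_i/k$. Continuity of algebraic $K$-theory ensures that every class of $K_0(\cA\otimes_k l)_\bbQ$ is the image of a class in $K_0(\cA\otimes_k l_i)_\bbQ$ for some index $i$, and Lemma \ref{lem:injectivity} applied to the further extension $l/l_i$ (with $\cA\otimes_k l_i$ as the base dg $l_i$-linear category) reduces the claim to the case of a finite extension $l_i/k$. For such $l_i/k$, restriction of scalars $\mathrm{Res}_{l_i/k}\colon \cD_c(\cA\otimes_k l_i) \to \cD_c(\cA)$ is well-defined and right adjoint to the base-change functor $-\otimes_k l_i$; counting Hom-space dimensions first over $l_i$ and then over $k$ yields the identity
$$
\chi_{\cA\otimes_k l_i}(\alpha\otimes_k l_i, [N]) \;=\; \tfrac{1}{[l_i:k]}\,\chi_\cA(\alpha,[\mathrm{Res}_{l_i/k}(N)]),
$$
which vanishes whenever $\alpha \in \ker(\chi_\cA)$.

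If instead $k$ is algebraically closed, I would proceed by spreading out. Using continuity of algebraic $K$-theory once more, I reduce to the case where $l/k$ is finitely generated and the target class is $[N]$ for a single compact dg module. Choose a smooth irreducible $k$-variety $X$ with function field $k(X) = l$ and, possibly shrinking $X$, a compact object $\widetilde{N}$ over $\cA\otimes_k X$ whose generic fibre recovers $N$. Representing $\alpha = \sum a_i [M_i]$ with $M_i \in \cD_c(\cA)$ and base-changing each $M_i$ trivially from $\cA$ to $\cA\otimes_k X$, the complex $\RHom_{\cA\otimes_k X}(M_i,\widetilde{N})$ is a perfect $\cO_X$-module, using both the smoothness and properness of $\cA$. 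Its fibrewise Euler characteristic is therefore locally constant on $X$, and the irreducibility of $X$ forces the generic value over $\mathrm{Spec}(l)$ to agree with the value at every closed point $x\in X(k)$; such points exist because $k$ is algebraically closed. Assembling with the coefficients $a_i$ yields
$$
\chi_{\cA\otimes_k l}(\alpha\otimes_k l, [N]) \;=\; \chi_\cA(\alpha,[\widetilde{N}_x]) \;=\; 0,
$$
the last equality because $\alpha\in\ker(\chi_\cA)$. The principal obstacle is precisely this second case: one must verify that $\RHom$ between compact dg modules over $\cA\otimes_k X$ lands in perfect $\cO_X$-complexes, and that the resulting Euler characteristic is compatible with base change at both the generic point and a closed $k$-point—this is where the smoothness and properness of $\cA$, together with the regularity of $X$, genuinely enter.
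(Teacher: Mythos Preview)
Your argument is correct and follows the same skeleton as the paper's proof: the algebraic case is handled identically via the extension--restriction adjunction (your factor $1/[l_i:k]$ is the correct one) together with a colimit reduction to finite subextensions, and the algebraically closed case proceeds by spreading out and then specialising to a $k$-rational point supplied by the Nullstellensatz.

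The one genuine difference lies in how the specialisation step is justified. You spread $N$ out over a \emph{smooth} irreducible $k$-variety $X$ with $k(X)=l$ and use that $\RHom_{\cA\otimes_k X}(M_i,\widetilde{N})$ is a perfect $\cO_X$-complex (by smooth-properness of $\cA$), hence has locally constant fibrewise Euler characteristic; this is the point you correctly flag as requiring care. The paper instead works over an arbitrary finitely generated connected $k$-subalgebra $k_{i_0}\subset l$ and, rather than proving perfectness of any $\RHom$, introduces a rank-based Euler pairing over connected commutative rings (Remarks~\ref{rk:generalization}--\ref{rk:generalization1}) and uses its functoriality under the two ring maps $k_{i_0}\hookrightarrow l$ and $k_{i_0}\xrightarrow{p} k$ to compare directly. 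Your approach is more geometric and avoids introducing the auxiliary pairing, at the cost of checking base-change compatibility of $\RHom$; the paper's approach trades that verification for the bookkeeping of Remark~\ref{rk:generalization1}, and does not need $\mathrm{Spec}(k_{i_0})$ to be smooth.
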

\begin{proof}
The $\bbQ$-vector space $K_0(\cA)_\bbQ$, resp. $K_0(\cA\otimes_k l)_\bbQ$, is generated by the elements $a_1[M_1]+ \cdots + a_n[M_n]$, resp. $b_1[N_1]+ \cdots + b_m[N_m]$, with $a_1, \ldots, a_n \in \bbQ$ and $M_1, \ldots, M_n \in \cD_c(\cA)$, resp. $b_1, \ldots, b_m \in \bbQ$ and $N_1, \ldots, N_m \in \cD_c(\cA\otimes_k l)$. Therefore, given a right dg $\cA$-module $M \in \cD_c(\cA)$, such that $[M]\in \mathrm{Ker}(\chi)$, and a right dg $(\cA\otimes_k l)$-module $N \in \cD_c(\cA\otimes_k l)$, it suffices to show that $\chi([M\otimes_k l], [N])=0$; the injectivity of \eqref{eq:induced-22} follows automatically from the above Lemma \ref{lem:injectivity}.

Let us assume first that $l/k$ is a finite (algebraic) field extension of degree $d$. In this case, we have the following adjunction of categories:
$$
\xymatrix{
\cD_c(\cA\otimes_k l) \ar@<1ex>[d]^-{\mathrm{res}}\\
\cD(\cA) \ar@<1ex>[u]^-{-\otimes_k l}\,.
}
$$
This yields adjunction isomorphisms
\begin{eqnarray*}
\Hom_{\cD_c(\cA\otimes_k l)}(M\otimes_k l, N[j]) \simeq \Hom_{\cD_c(\cA)}(M,\mathrm{res}(N)[j])& j \in \bbZ
\end{eqnarray*}
and hence the following equality:
\begin{equation}\label{eq:equality-key}
\mathrm{dim}_l \Hom_{\cD_c(\cA\otimes_k l)}(M\otimes_k l, N[j]) = d\cdot \mathrm{dim}_k \Hom_{\cD_c(\cA)}(M, \mathrm{res}(N)[j])\,.
\end{equation}
Consequently, we conclude that $\chi([M\otimes_k l], [N])= d\cdot \chi([M], [\mathrm{res}(N)])=0$.

Let us now assume that $l/k$ is an infinite algebraic field extension. In this case, $l$ identifies with the colimit of the filtrant diagram $\{l_i\}_{i \in I}$ of all those intermediate field extensions $l/l_i/k$ which are finite over $k$. This leads to an equivalence $\mathrm{colim}_{i \in I} \cD_c(\cA\otimes_{k}l_i) \simeq \cD_c(\cA\otimes_k l)$. Consequently, there exists an index $i_o \in I$ and a right dg $(\cA\otimes_k l_{i_o})$-module $N_{i_o}\in \cD_c(\cA\otimes_k l_{i_o})$ such that $N_{i_o} \otimes_{l_{i_o}}l \simeq N$. Making use of Lemma \ref{lem:injectivity}, 
 we hence obtain the equality $\chi([M\otimes_k l], [N])= \chi([M\otimes_k l_{i_o}], [N_{i_o}])$. The proof follows now from the equality $\chi([M\otimes_k l_{i_o}], [N_{i_o}])= d_{i_o} \cdot \chi([M], [\mathrm{res}(N_{i_o})])=0$, where $d_{i_o}$ stands for the degree of the finite field extension $l_{i_o}/k$.
 
Finally, let us assume that $k$ is algebraically closed.  Note that $l$ identifies with the colimit of the filtrant diagram $\{k_i\}_{i \in I}$ of all those finitely generated $k$-algebras $k_i$ which are contained in $l$; note that all these $k$-algebras $k_i$ are connected because they are contained in the field $l$. This leads to an equivalence of categories $\mathrm{colim}_{i \in I} \cD_c(\cA\otimes_{k}k_i) \simeq \cD_c(\cA\otimes_k l)$. Consequently, there exists an index $i_o \in I$ and a right dg $(\cA\otimes_k k_{i_o})$-module $N_{i_o}\in \cD_c(\cA\otimes_k k_{i_o})$ such that $N_{i_o} \otimes^{\bf L}_{k_{i_o}}l \simeq N$. Making use of Remark \ref{rk:generalization1}, we hence obtain the equality $\chi([M\otimes_k l], [N])= \chi([M\otimes_k k_{i_o}], [N_{i_o}])$. Since $k$ is algebraically closed and the $k$-algebra $k_{i_o}$ is finitely generated, the Hilbert's nullstellensatz theorem implies that the $k$-scheme $\mathrm{Spec}(k_{i_o})$ admits a rational point $p\colon \mathrm{Spec}(k) \to \mathrm{Spec}(k_{i_o})$. Hence, we can consider the Grothendieck class $[N_{i_o} \otimes^{\bf L}_{k_{i_o}} k] \in K_0(\cA)_\bbQ$. Using the fact that the composition $k \to k_{i_o} \stackrel{p}{\to} k$ is equal to the identity, we then conclude from Remark \ref{rk:generalization1} that $\chi([M\otimes_k k_{i_o}], [N_{i_o}])= \chi([M], [N_{i_o}\otimes^{\bf L}_{k_{i_o}}k])=0$. This finishes the proof.
\end{proof}

We now describe the behavior of the numerical Grothendieck group with respect to Galois field extensions and purely inseparable field extensions. 
\begin{notation}
Given a dg $k$-linear category $\cA$, let us denote by 
$$-\bullet - \colon K_0(\cA)_\bbQ \times K_0(k)_\bbQ \too K_0(\cA)_\bbQ$$
the bilinear pairing associated to the canonical (right) action of $\cD_c(k)$ on $\cD_c(\cA)$.
\end{notation}
\begin{proposition}[Galois]\label{prop:Galois}
Given a Galois field extension $l/k$ and a smooth proper dg $k$-linear category $\cA$, we have an induced isomorphism:
\begin{equation}\label{eq:induced-Galois}
-\otimes_kl\colon K_0(\cA)_\bbQ/_{\!\!\sim \mathrm{num}} \stackrel{\simeq}{\too} (K_0(\cA\otimes_k l)_\bbQ/_{\!\!\sim \mathrm{num}})^{\mathrm{Gal}(l/k)}\,.
\end{equation}
\end{proposition}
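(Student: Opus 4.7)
Injectivity of the base-change map \eqref{eq:induced-Galois} follows at once from Lemma \ref{lem:induced}, since every Galois extension is algebraic. The natural $\mathrm{Gal}(l/k)$-action on $\cA \otimes_k l$ through the second factor fixes every class of the form $[M \otimes_k l]$ with $M \in \cD_c(\cA)$, so the image of the base-change map is contained in the subgroup of Galois-invariants. The heart of the matter is therefore to prove surjectivity onto Galois-invariants.

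First I would treat the case of a \emph{finite} Galois extension $l/k$ of degree $d$, with $G := \mathrm{Gal}(l/k)$. The restriction functor $\mathrm{res}\colon \cD_c(\cA \otimes_k l) \to \cD_c(\cA)$ is simultaneously a left and a right adjoint to $-\otimes_k l$, and the standard splitting $l \otimes_k l \simeq \prod_{g \in G} l$ yields a natural Galois-descent isomorphism
\begin{equation*}
\mathrm{res}(N) \otimes_k l \;\simeq\; \bigoplus_{g \in G} g^\ast N \qquad \text{in } \cD_c(\cA \otimes_k l).
\end{equation*}
Since the $G$-action is by auto-equivalences preserving $l$-dimensions of Hom-spaces, it preserves the Euler pairing and hence descends to $K_0(\cA \otimes_k l)_\bbQ/_{\!\!\sim \mathrm{num}}$; similarly, the identity \eqref{eq:equality-key} shows that $\mathrm{res}$ descends to the numerical quotients. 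Given now a Galois-invariant class $\bar N \in (K_0(\cA\otimes_k l)_\bbQ/_{\!\!\sim \mathrm{num}})^G$, I would pick an arbitrary lift $N \in K_0(\cA \otimes_k l)_\bbQ$ and replace it by its average $\widetilde N := \tfrac{1}{d}\sum_{g \in G} g^\ast N$, which is genuinely $G$-invariant in $K_0(\cA \otimes_k l)_\bbQ$ and still lifts $\bar N$ modulo numerical equivalence (the averaging is legal because we work over $\bbQ$). Passing to Grothendieck groups in the descent isomorphism then produces the equality $[\mathrm{res}(\widetilde N) \otimes_k l] = \sum_{g \in G} g^\ast \widetilde N = d \cdot \widetilde N$, so that $\tfrac{1}{d}[\mathrm{res}(\widetilde N)] \in K_0(\cA)_\bbQ/_{\!\!\sim \mathrm{num}}$ is the desired preimage of $\bar N$.

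For a general (possibly infinite) Galois extension $l/k$, I would write $l = \mathrm{colim}_{i \in I} l_i$ as a filtered colimit of its finite Galois subextensions and invoke the equivalence $\mathrm{colim}_{i \in I} \cD_c(\cA \otimes_k l_i) \simeq \cD_c(\cA \otimes_k l)$ (already used in the proof of Lemma \ref{lem:induced}) to realize any given Galois-invariant class as coming from some finite intermediate level, at which point the finite Galois case just established provides a preimage in $K_0(\cA)_\bbQ/_{\!\!\sim \mathrm{num}}$. The most delicate point I anticipate is the Galois-descent isomorphism at the level of compact dg $(\cA \otimes_k l)$-modules together with the compatibility of $\mathrm{res}$ and of the $G$-action with numerical equivalence; however, both facts are ultimately consequences of the interaction between $-\otimes_k l$, $\mathrm{res}$, and the Euler pairing already codified in Lemma \ref{lem:injectivity} and formula \eqref{eq:equality-key}, so I do not expect any serious obstruction beyond careful bookkeeping.
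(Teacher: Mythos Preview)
Your proposal is correct and follows essentially the same route as the paper: for finite Galois $l/k$ both arguments hinge on the pair of identities $\mathrm{res}\circ(-\otimes_k l)=d\cdot\id$ and $(-\otimes_k l)\circ\mathrm{res}=\sum_{\sigma\in G}\sigma(-)$ (your ``Galois-descent isomorphism'' is exactly the module-level source of the second identity), and the paper packages the conclusion via the idempotent $\tfrac{1}{d}\sum_\sigma\sigma(-)$ where you instead average a chosen lift---these are the same maneuver. For the infinite case the paper likewise passes to the filtered colimit of finite Galois subextensions, using in addition the injectivity from Lemma~\ref{lem:induced} to identify $\mathrm{Gal}(l/k)$-invariants with the colimit of $\mathrm{Gal}(l_i/k)$-invariants; you should make that last identification explicit, but it is exactly the bookkeeping you anticipate.
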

\begin{proof}
Let us assume first that the field extension $l/k$ is finite of degree $d$. In this case we have the following adjunctions of categories:
$$
\xymatrix{
\cD_c(\cA\otimes_k l) \ar@<1ex>[d]^-{\mathrm{res}} && \cD_c(l)  \ar@<1ex>[d]^-{\mathrm{res}}\\
\cD_c(\cA) \ar@<1ex>[u]^-{-\otimes_k l} && \cD_c(k) \ar@<1ex>[u]^-{-\otimes_k l}\,.
}
$$
Note that the equality \eqref{eq:equality-key} in the proof of Lemma \ref{lem:induced} implies not only that the homomorphism $-\otimes_kl \colon K_0(\cA)_\bbQ \to K_0(\cA\otimes_k l)_\bbQ$ preserves the subspaces $\mathrm{Ker}(\chi)$, but also that the homomorphism $\mathrm{res} \colon K_0(\cA\otimes_k l)_\bbQ \to K_0(\cA)_\bbQ$ preserves the subspaces $\mathrm{Ker}(\chi)$. Hence, we can consider the following two homomorphisms:
\begin{eqnarray*}
K_0(\cA)_\bbQ/_{\!\!\sim \mathrm{num}} \stackrel{-\otimes_k l}{\too} K_0(\cA\otimes_k l)_\bbQ/_{\!\!\sim \mathrm{num}} && K_0(\cA\otimes_k l)_\bbQ/_{\!\!\sim \mathrm{num}} \stackrel{\mathrm{res}}{\too} K_0(\cA)_\bbQ/_{\!\!\sim \mathrm{num}}\,.
\end{eqnarray*}
Clearly, the composition $\mathrm{res}\circ (-\otimes_k l)$ is equal to $d\cdot \id$. Given an element $\sigma \in G:=\mathrm{Gal}(l/k)$, let us write $\sigma(-)$ for the associated automorphism of $K_0(\cA\otimes_k l)_\bbQ/_{\!\!\sim \mathrm{num}}$. Under these notations, the following equalities 
$$ \mathrm{res}([N])\otimes_k l = [N] \bullet (\mathrm{res}([l])\otimes_k l)\stackrel{\mathrm{(a)}}{=} [N] \bullet (\sum_{\sigma \in G}\sigma([l]))= \sum_{\sigma \in G}\sigma([N])$$
hold for every $N \in \cD_c(\cA\otimes_k l)$; the equality (a) follows from the fact that $l/k$ is Galois. Consequently, since the $\bbQ$-vector space $K_0(\cA\otimes_k l)_\bbQ$ is generated by the elements $b_1[N_1]+ \cdots + b_m [N_m]$, with $b_1, \ldots, b_m \in \bbQ$ and $N_1, \ldots, N_m \in \cD_c(\cA\otimes_k l)$, we conclude that the composition $\mathrm{res}(-)\otimes_k l$ is equal to $\sum_{\sigma \in G}\sigma(-)$. The proof follows now from the fact that the $\bbQ$-vector space $(K_0(\cA\otimes_k l)_\bbQ/_{\!\!\sim \mathrm{num}})^{G}$ agrees with the image of the idempotent endomorphism $\frac{1}{d} \sum_{\sigma \in G}\sigma(-)$ of $K_0(\cA\otimes_k l)_\bbQ/_{\!\!\sim \mathrm{num}}$.

Let us now assume that the field extension $l/k$ is infinite. In this case, $l$ identifies with the colimit of the filtrant diagram $\{l_i\}_{i \in I}$ of all those intermediate field extensions $l/l_i/k$ which are finite and Galois over $k$. This leads to an equivalence of categories $\mathrm{colim}_{i \in I} \cD_c(\cA\otimes_k l_i) \simeq \cD_c(\cA\otimes_k l)$ and hence to an isomorphism $\mathrm{colim}_{i \in I} K_0(\cA\otimes_k l_i)_\bbQ \simeq K_0(\cA\otimes_k l)_\bbQ$. Thanks to Lemma \ref{lem:induced}, note that we have a similar isomorphism $\mathrm{colim}_{i \in I} K_0(\cA\otimes_k l_i)_\bbQ/_{\!\!\sim \mathrm{num}} \simeq K_0(\cA\otimes_k l)_\bbQ/_{\!\!\sim \mathrm{num}}$. Consequently, the proof follows from the following natural isomorphisms
\begin{eqnarray}
(K_0(\cA\otimes_k l)_\bbQ/_{\!\!\sim \mathrm{num}})^{\mathrm{Gal}(l/k)} & \simeq & (\mathrm{colim}_{i \in I} K_0(\cA\otimes_k l_i)_\bbQ/_{\!\!\sim \mathrm{num}})^{\mathrm{Gal}(l/k)} \nonumber \\
& \simeq & \mathrm{colim}_{i \in I} (K_0(\cA\otimes_k l_i)_\bbQ/_{\!\!\sim \mathrm{num}})^{\mathrm{Gal}(l/k)} \nonumber \\
& \simeq & \mathrm{colim}_{i \in I} (K_0(\cA\otimes_k l_i)_\bbQ/_{\!\!\sim \mathrm{num}})^{\mathrm{Gal}(l_i/k)} \label{eq:star11}\\
& \simeq & \mathrm{colim}_{i \in I} K_0(\cA)_\bbQ/_{\!\!\sim \mathrm{num}} \nonumber\\
& \simeq & K_0(\cA)_\bbQ/_{\!\!\sim \mathrm{num}}\,, \nonumber
\end{eqnarray}
where in \eqref{eq:star11} we are (implicitly) using the surjection $\mathrm{Gal}(l/k) \twoheadrightarrow \mathrm{Gal}(l_i/k)$.
\end{proof}
\begin{proposition}[Purely inseparable]\label{prop:inseparable}
Given a purely inseparable field extension $l/k$ and a smooth proper dg $k$-linear category $\cA$, we have an induced isomorphism:
\begin{equation}\label{eq:inseparable}
-\otimes_k l \colon K_0(\cA)_\bbQ/_{\!\!\sim \mathrm{num}} \stackrel{\simeq}{\too} K_0(\cA\otimes_k l)_\bbQ/_{\!\!\sim \mathrm{num}}\,.
\end{equation}
\end{proposition}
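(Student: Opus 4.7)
The plan is to mirror the structure of the proof of Proposition \ref{prop:Galois}, replacing the decomposition of $l\otimes_k l$ into Galois twists by the $\mathfrak{m}$-adic filtration of the local Artinian ring $l\otimes_k l$ arising from a purely inseparable extension. First, I would reduce to the case of a finite purely inseparable extension $l/k$ of degree $d=p^r$ via the same filtered-colimit argument employed in the proofs of Lemma \ref{lem:induced} and Proposition \ref{prop:Galois}, so assume henceforth that $l/k$ is finite purely inseparable.

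As in the Galois setting, the adjunction $-\otimes_k l\dashv \mathrm{res}$ between $\cD_c(\cA)$ and $\cD_c(\cA\otimes_k l)$, together with Lemma \ref{lem:injectivity} and the identity \eqref{eq:equality-key}, shows that $-\otimes_k l$ and $\mathrm{res}$ descend to homomorphisms between the rational numerical Grothendieck groups, and that the composition $\mathrm{res}\circ(-\otimes_k l)$ equals $d\cdot\id$, since $\mathrm{res}(M\otimes_k l)\simeq M^{\oplus d}$ in $\cD_c(\cA)$. The crucial new computation is that $(-\otimes_k l)\circ\mathrm{res}$ also equals $d\cdot\id$. Given $N\in \cD_c(\cA\otimes_k l)$, identify $\mathrm{res}(N)\otimes_k l\simeq N\otimes_l (l\otimes_k l)$ in $\cD_c(\cA\otimes_k l)$, where $l\otimes_k l$ is regarded as an $(l,l)$-bimodule and the right $(\cA\otimes_k l)$-structure on the tensor product comes from the right $l$-factor of $l\otimes_k l$. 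Since $l/k$ is purely inseparable of characteristic $p$, we have $(x\otimes 1 - 1\otimes x)^{p^r} = x^{p^r}\otimes 1 - 1\otimes x^{p^r} = 0$ for every $x\in l$, so $l\otimes_k l$ is a local Artinian $l$-algebra with residue field $l$ (via the multiplication map) and nilpotent maximal ideal $\mathfrak{m}$. The subquotients $\mathfrak{m}^i/\mathfrak{m}^{i+1}$ are annihilated by $\mathfrak{m}$, hence are $l$-vector spaces on which the left and right $l$-actions coincide; denoting their dimensions by $d_i$, we have $\sum_i d_i = \dim_l(l\otimes_k l) = d$. Tensoring the $\mathfrak{m}$-adic filtration with $N$ over $l$ (exact since $l$ is a field) yields a filtration in $\cD_c(\cA\otimes_k l)$ with graded pieces $N^{\oplus d_i}$, and consequently
$$[\mathrm{res}(N)\otimes_k l] = \sum_i [N\otimes_l (\mathfrak{m}^i/\mathfrak{m}^{i+1})] = \sum_i d_i\,[N] = d\cdot [N]$$
in $K_0(\cA\otimes_k l)$. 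As $d$ is invertible in $\bbQ$, the map $\tfrac{1}{d}\mathrm{res}$ is a two-sided inverse to $-\otimes_k l$ on the rationalized numerical Grothendieck groups, which settles the finite case.

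The main obstacle is precisely this $K_0$-calculation. Whereas in the Galois case of Proposition \ref{prop:Galois} the ring $l\otimes_k l$ splits as a product indexed by $\mathrm{Gal}(l/k)$ and the composition $(-\otimes_k l)\circ\mathrm{res}$ becomes the averaging operator $\sum_\sigma \sigma$, here $l\otimes_k l$ is highly non-reduced. The argument instead rests on the observation that the subquotients of its $\mathfrak{m}$-adic filtration carry a single ($\text{left}=\text{right}$) $l$-action, and on using $\bbQ$-coefficients to invert the factor $d=p^r$ which arises as the length of this filtration.
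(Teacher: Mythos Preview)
Your proof is correct and follows the same overall strategy as the paper: reduce to the finite case by a filtered-colimit argument, show that both composites $\mathrm{res}\circ(-\otimes_k l)$ and $(-\otimes_k l)\circ\mathrm{res}$ equal $d\cdot\id$, and conclude by inverting $d$ over $\bbQ$. The only difference lies in how the second identity is established. The paper packages it via the module action $-\bullet-$ of $K_0(l)$ on $K_0(\cA\otimes_k l)$, writing $\mathrm{res}([N])\otimes_k l=[N]\bullet(\mathrm{res}([l])\otimes_k l)$ and then invoking Quillen's result \cite[\S7 Prop.~4.8]{Quillen} to identify $\mathrm{res}([l])\otimes_k l$ with $d\cdot[l]$ in $K_0(l)$. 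You instead prove this directly: you observe that $l\otimes_k l$ is local Artinian with nilpotent maximal ideal $\mathfrak{m}$, and that on each graded piece $\mathfrak{m}^i/\mathfrak{m}^{i+1}$ the two $l$-actions coincide, so tensoring the filtration with $N$ exhibits $[\mathrm{res}(N)\otimes_k l]=d\cdot[N]$. This is precisely the content behind Quillen's cited statement, so your argument is a self-contained unpacking of that reference rather than a genuinely different route; it buys independence from the citation at the cost of a short extra computation.
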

\begin{proof}
Let us assume first that the field extension $l/k$ is finite of degree $d$. Similarly to the proof of Proposition \ref{prop:Galois}, the composition $\mathrm{res} \circ (-\otimes_k l)$ is equal to $d\cdot \id$. On the other hand, the following equalities
$$ \mathrm{res}([N]) \otimes_k l = [N] \bullet (\mathrm{res}([l])\otimes_k l)\stackrel{\mathrm{(a)}}{=} [N] \bullet (d \cdot [l]) = d\cdot [N]$$
hold for every $N \in \cD_c(\cA\otimes_kl)$; the equality (a) follows from \cite[\S7 Prop.~4.8]{Quillen}. Consequently, since the $\bbQ$-vector space $K_0(\cA\otimes_k l)_\bbQ$ is generated by the the elements $b_1[N_1]+ \cdots + b_m [N_m]$, with $b_1, \ldots, b_m \in \bbQ$ and $N_1, \ldots, N_m \in \cD_c(\cA\otimes_k l)$, we conclude that the composition $\mathrm{res}(-)\otimes_k l$ is also equal to $d\cdot \id$. This implies that the above induced homomorphism \eqref{eq:inseparable} is invertible.

Let us now assume that the field extension $l/k$ is infinite. In this case, $l$ identifies with the colimit of the filtrant diagram $\{l_i\}_{i \in I}$ of all those intermediate field extensions $l/l_i/k$ which are finite and purely inseparable over $k$.  This leads to an equivalence of categories $\mathrm{colim}_{i \in I} \cD_c(\cA\otimes_k l_i)\simeq \cD_c(\cA\otimes_k l)$ and hence to an isomorphism $\mathrm{colim}_{i \in I} K_0(\cA\otimes_k l_i)_\bbQ/_{\!\!\sim \mathrm{num}}\simeq K_0(\cA\otimes_k l)_\bbQ/_{\!\!\sim \mathrm{num}}$. Consequently, the proof follows from the preceding finite-dimensional case.
\end{proof}

We now have all the ingredients necessary for the proof of item (i). Let us assume first that $l/k$ is a field extension with $k$ algebraically closed. The injectivity of \eqref{eq:iso-2} follows automatically from Lemma \ref{lem:induced}. In order to prove the surjectivity of \eqref{eq:iso-2}, note first that $l$ identifies with the colimit of the filtrant diagram $\{k_i\}_{i \in I}$ of all those finitely generated $k$-algebras $k_i$ which are contained in $l$; all these $k$-algebras $k_i$ are connected because they are contained in the field $l$. This leads to an equivalence of categories $\mathrm{colim}_{i \in I} \cD_c(\cA\otimes_k k_i)\simeq \cD_c(\cA\otimes_kl)$ and hence to an isomorphism $\mathrm{colim}_{i \in I} K_0(\cA\otimes_k k_i)_\bbQ\simeq K_0(\cA\otimes_kl)_\bbQ$. Therefore, given an element $\alpha \in K_0(\cA\otimes_k l)_\bbQ$, there exists an index $i_o \in I$ and an element $\alpha_{i_o} \in K_0(\cA\otimes_k k_{i_o})_\bbQ$ such that $\alpha_{i_o} \otimes^{\bf L}_{k_{i_o}} l = \alpha$. Since $k$ is algebraically closed and the $k$-algebra $k_{i_o}$ is finitely generated, the Hilbert's nullstellensatz theorem implies that the $k$-scheme $\mathrm{Spec}(k_{i_o})$ admits a rational point $p\colon \mathrm{Spec}(k) \to \mathrm{Spec}(k_{i_o})$. Hence, we can consider the element $\alpha_{i_o} \otimes^{\bf L}_{k_{i_o}} k \in K_0(\cA)_\bbQ$. We now claim the following:
\begin{eqnarray}\label{eq:claim}
\chi(\alpha, \beta)= \chi((\alpha_{i_o} \otimes^{\bf L}_{k_{i_o}} k)\otimes_k l, \beta) && \forall \,\beta \in K_0(\cA\otimes_k l)_\bbQ\,.
\end{eqnarray}
Note that since $\alpha$ is arbitrary, this claim would imply the surjectivity of \eqref{eq:iso-2}. As above, given an element $\beta \in K_0(\cA\otimes_k l)_\bbQ$, there exists an index $i'_o \in I$ and an element $\beta_{i'_o} \in K_0(\cA\otimes_k k_{i'_o})_\bbQ$ such that $\beta_{i'_o} \otimes^{\bf L}_{k_{i'_o}} l = \beta$. Since $I$ is a filtered diagram, we can (and will) assume without loss of generality that there exists a morphism $i_o \to i'_o$ in $I$. In particular, this yields the base-change homomorphism $-\otimes^{\bf L}_{k_{i_o}} k_{i'_o}\colon K_0(\cA\otimes_k k_{i_o})_\bbQ \to K_0(\cA\otimes_k k_{i'_o})_\bbQ$. Therefore, thanks to the general Remark \ref{rk:generalization1}, in order to prove the above claim \eqref{eq:claim}, it suffices to show that 
\begin{equation}\label{eq:claim2}
\chi(\alpha_{i_o} \otimes^{\bf L}_{k_{i_o}} k_{i'_o}, \beta_{i'_o}) = \chi((\alpha_{i_o} \otimes^{\bf L}_{k_{i_o}} k)\otimes_k k_{i'_o}, \beta_{i'_o})\,.
\end{equation}
Since $k$ is algebraically closed and the $k$-algebra $k_{i'_o}$ is finitely generated, the $k$-scheme $\mathrm{Spec}(k_{i'_o})$ admits a rational point $q\colon \mathrm{Spec}(k) \to \mathrm{Spec}(k_{i'_o})$. Hence, we can consider the base-change homomorphism $-\otimes^{\bf L}_{k_{i'_o}} k\colon K_0(\cA\otimes_k k_{i'_o})_\bbQ \to K_0(\cA)_\bbQ$. Making use once again of the general Remark \ref{rk:generalization1}, we observe that \eqref{eq:claim2} holds if and only if the following equality holds:
\begin{equation*}\label{eq:claim3}
\chi((\alpha_{i_o} \otimes^{\bf L}_{k_{i_o}} k_{i'_o})\otimes^{\bf L}_{k_{i'_o}} k, \beta_{i'_o} \otimes^{\bf L}_{k_{i'_o}} k)= \chi(((\alpha_{i_o} \otimes^{\bf L}_{k_{i_o}} k)\otimes_k k_{i'_o})\otimes^{\bf L}_{k_{i'_o}} k, \beta_{i'_o} \otimes^{\bf L}_{k_{i'_o}} k)\,.
\end{equation*}
Thanks to the general Remark \ref{rk:generalization1}, the left-hand side is equal to
\begin{eqnarray}
 & = &  \chi(\alpha_{i_o} \otimes^{\bf L}_{k_{i_o}} k_{i'_o}, (\beta_{i'_o} \otimes^{\bf L}_{k_{i'_o}}k)\otimes_k k_{i'_o}) \label{eq:star-1111}\\
& = & \chi(\alpha_{i_o}, (\beta_{i'_o} \otimes^{\bf L}_{k_{i'_o}} k)\otimes_k k_{i_o}) \nonumber \\
& = & \chi(\alpha_{i_o} \otimes^{\bf L}_{k_{i_o}} k, \beta_{i'_o} \otimes^{\bf L}_{k_{i'_o}} k) \label{eq:star-2222}\,,
\end{eqnarray}
where in \eqref{eq:star-1111}, resp. \eqref{eq:star-2222}, we are (implicitly) using the fact that the composition $k \to k_{i'_o} \stackrel{q}{\to} k$, resp. $k \to k_{i_o} \stackrel{p}{\to} k$, is equal to the identity. Similarly, thanks to the general Remark \ref{rk:generalization1}, the right-hand side is equal to
\begin{eqnarray}
& = & \chi((\alpha_{i_o} \otimes^{\bf L}_{k_{i_o}} k)\otimes_k k_{i'_o}, (\beta_{i'_o} \otimes^{\bf L}_{k_{i'_o}} k) \otimes_k k_{i'_o}) \label{eq:star-3333} \\
& = &  \chi(\alpha_{i_o} \otimes^{\bf L}_{k_{i_o}} k, \beta_{i'_o} \otimes^{\bf L}_{k_{i'_o}} k) \nonumber\,,
\end{eqnarray}
where in \eqref{eq:star-3333} we are (implicitly) using the fact that the composition $k \to k_{i'_o} \stackrel{q}{\to} k$ is equal to the identity. This proves the above claim \eqref{eq:claim} and hence shows that the base-change homomorphism \eqref{eq:iso-2} is invertible. Finally, note that the above proof also holds integrally. Consequently, base-change induces an isomorphism:
$$ - \otimes_k l \colon K_0(\cA)/_{\!\!\sim \mathrm{num}} \stackrel{\simeq}{\too} K_0(\cA\otimes_k l)/_{\!\!\sim \mathrm{num}}\,.$$

Let us now assume that $l/k$ is a field extension of separably closed fields. Consider the associated field extension $l_{\mathrm{alg}}/k_{\mathrm{alg}}$, where $l_{\mathrm{alg}}$ stands for ``the'' algebraic closure of $l$ and $k_{\mathrm{alg}}$ for the algebraic closure of $k$ inside $l_{\mathrm{alg}}$. Since by assumption $k$ and $l$ are separably closed, both field extensions $l_{\mathrm{alg}}/l$ and $k_{\mathrm{alg}}/k$ are purely inseparable. Under these notations, we have the following diagram:
$$
\xymatrix@C=1.5em@R=2.5em{
K_0(\cA\otimes_k l)_\bbQ \ar[rr]^-{-\otimes_l l_{\mathrm{alg}}} \ar@{->>}[d] & & K_0(\cA\otimes_k l_{\mathrm{alg}})_\bbQ \ar[d] \\
K_0(\cA\otimes_k l)_\bbQ/_{\!\!\sim \mathrm{num}} \ar[rr]^-{-\otimes_l l_{\mathrm{alg}}} && K_0(\cA\otimes_k l_{\mathrm{alg}})_\bbQ/_{\!\!\sim \mathrm{num}} \\
K_0(\cA)_\bbQ/_{\!\!\sim \mathrm{num}} \ar[rr]^-{-\otimes_k k_{\mathrm{alg}}} \ar@{..>}[u] && K_0(\cA\otimes_k k_{\mathrm{alg}})_\bbQ/_{\!\!\sim \mathrm{num}} \ar[u]_-{-\otimes_{k_{\mathrm{alg}}}l_{\mathrm{alg}}} \\
K_0(\cA)_\bbQ \ar@{->>}[u] \ar[rr]^-{-\otimes_k k_{\mathrm{alg}}} \ar@/^5pc/[uuu]^-{-\otimes_kl} && K_0(\cA\otimes_k k_{\mathrm{alg}})_\bbQ \ar[u] \ar@/_6pc/[uuu]_-{-\otimes_{k_{\mathrm{alg}}}l_{\mathrm{alg}}}  \,.
}
$$
Thanks to Proposition \ref{prop:inseparable} and to the above considerations, the three ``solid'' base-change homomorphisms in the central square are invertible. This implies that there exists a unique ``dashed'' isomorphism making the central square commute. The above diagram implies moreover that the latter ``dashed'' isomorphism is induced by base-change $-\otimes_k l$. Hence, the proof is finished.

Finally, let $l/k$ be a primary field extension. Thanks to Proposition \ref{prop:inseparable}, we can assume without loss of generality that $l/k$ is regular. Consider the associated field extension $l_{\mathrm{sep}}/k_{\mathrm{sep}}$, where $l_{\mathrm{sep}}$ stands for ``the'' separable closure of $l$ and $k_{\mathrm{sep}}$ for the separable closure of $k$ inside $l_{\mathrm{sep}}$. The field extensions $l_{\mathrm{alg}}/l$ and $k_{\mathrm{alg}}/k$ are Galois. Moreover, since $l/k$ is regular, the homomorphism $\mathrm{Gal}(l_{\mathrm{sep}}/l) \twoheadrightarrow \mathrm{Gal}(k_{\mathrm{sep}}/k)$ is surjective. Under these notations, we have the following  diagram:
$$
\xymatrix@C=1.5em@R=2.5em{
K_0(\cA\otimes_k l)_\bbQ \ar[rr]^-{-\otimes_l l_{\mathrm{sep}}} \ar@{->>}[d] & & K_0(\cA\otimes_k l_{\mathrm{alg}})_\bbQ^{\mathrm{Gal}(l_{\mathrm{sep}}/l)} \ar[d] \\
K_0(\cA\otimes_k l)_\bbQ/_{\!\!\sim \mathrm{num}} \ar[rr]^-{-\otimes_l l_{\mathrm{sep}}} && (K_0(\cA\otimes_k l_{\mathrm{sep}})_\bbQ/_{\!\!\sim \mathrm{num}})^{\mathrm{Gal}(l_{\mathrm{sep}}/l)} \\
K_0(\cA)_\bbQ/_{\!\!\sim \mathrm{num}} \ar[rr]^-{-\otimes_k k_{\mathrm{sep}}} \ar@{..>}[u] && (K_0(\cA\otimes_k k_{\mathrm{sep}})_\bbQ/_{\!\!\sim \mathrm{num}})^{\mathrm{Gal}(k_{\mathrm{sep}}/k)} \ar[u]_-{-\otimes_{k_{\mathrm{sep}}}l_{\mathrm{sep}}} \\
K_0(\cA)_\bbQ \ar@{->>}[u] \ar[rr]^-{-\otimes_k k_{\mathrm{sep}}} \ar@/^5pc/[uuu]^-{-\otimes_kl} && K_0(\cA\otimes_k k_{\mathrm{sep}})_\bbQ^{\mathrm{Gal}(k_{\mathrm{sep}}/k)} \ar[u] \ar@/_8pc/[uuu]_-{-\otimes_{k_{\mathrm{sep}}}l_{\mathrm{sep}}}  \,.
}
$$
Thanks to Proposition \ref{prop:Galois} and to the above considerations, the three ``solid'' base-change homomorphisms in the central square are invertible. This implies that there exists a unique ``dashed'' isomorphism making the central square commute. The above diagram implies moreover that the latter ``dashed'' isomorphism is induced by base-change $-\otimes_k l$. Hence, the proof is finished.
%

\subsection*{Proof of item (ii)}
We start by describing the behavior of mod-$n$ algebraic $K$-theory with respect to purely inseparable field extensions.
\begin{proposition}[Purely inseparable]\label{prop:inseparable2}
Given a purely inseparable field extension $l/k$, a dg category $\cA$, and an integer $n$ coprime to $\mathrm{char}(k)$, we have an isomorphism:
\begin{equation}\label{eq:induced-last1}
-\otimes_k l \colon K_\ast(\cA;\bbZ/n) \stackrel{\simeq}{\too} K_\ast(\cA\otimes_k l; \bbZ/n)\,.
\end{equation}
\end{proposition}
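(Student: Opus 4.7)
The plan is to transport the strategy of Proposition \ref{prop:inseparable} to mod-$n$ algebraic $K$-theory, exploiting the fact that every finite purely inseparable extension has degree a power of $p := \mathrm{char}(k)$, while $n$ is coprime to $p$ by hypothesis. (If $\mathrm{char}(k)=0$ there is nothing to prove, since $l=k$.)

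First I would reduce to the case where $l/k$ is finite. If $l/k$ is infinite, then $l$ is the colimit of the filtrant diagram $\{l_i\}_{i\in I}$ of its finite intermediate purely inseparable subextensions, hence $\cA\otimes_k l \simeq \mathrm{hocolim}_{i\in I}\,\cA\otimes_k l_i$ in $\Hmo(k)$. Since nonconnective algebraic $K$-theory commutes with filtered homotopy colimits of dg categories, and smashing with $\bbS/n$ in $\Ho(\Spt)$ preserves such colimits, mod-$n$ algebraic $K$-theory commutes with this filtered colimit, so the isomorphism \eqref{eq:induced-last1} in the infinite case reduces to the finite one.

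Assume next that $l/k$ is finite of degree $d = p^m$. At the level of compact derived categories we have the usual adjunction
$$ -\otimes_k l \colon \cD_c(\cA) \rightleftarrows \cD_c(\cA\otimes_k l) \colon \mathrm{res}, $$
restriction preserving compact objects because $l$ is perfect over $k$. Applying $K(-;\bbZ/n)$ gives a pair of spectrum-level maps, and the composition $\mathrm{res}\circ(-\otimes_k l)$ is multiplication by $d$ (because $l$ is free of rank $d$ over $k$). The reverse composition $(-\otimes_k l)\circ \mathrm{res}$ is tensoring with $l\otimes_k l$ over $l$, and this is again multiplication by $d$: the key input is the identity $[l\otimes_k l] = d\cdot [l]$ in $K_0(l)$ furnished by \cite[\S7, Prop.~4.8]{Quillen}, which holds precisely because $l/k$ is purely inseparable. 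This is exactly the computation already used in the proof of Proposition \ref{prop:inseparable}.

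Since $\gcd(n,p)=1$, the integer $d = p^m$ is a unit in $\bbZ/n$; consequently multiplication by $d$ acts invertibly on $K(\cA;\bbZ/n) = K(\cA)\wedge \bbS/n$ and on $K(\cA\otimes_k l;\bbZ/n)$. Hence $-\otimes_k l$ admits inverses on both sides in mod-$n$ $K$-theory and is an isomorphism. The analogous statements for the variants $K_\ast(-)^\wedge_n$, $K^{\mathrm{et}}_\ast(-;\bbZ/n)$, $K_\ast(-)/n$, and ${}_nK_\ast(-)$ follow formally from this spectrum-level isomorphism by passing to the homotopy limit over $\nu$ in $\bbZ/n^\nu$, to the $K(1)$-localization, or to the universal coefficient sequences, respectively. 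The one technical point to pin down carefully is that the two mutually inverse multiplications-by-$d$ arise genuinely at the spectrum level (not merely on homotopy groups); once that functoriality is set up, the argument is a direct modular-arithmetic transcription of the rational computation in Proposition \ref{prop:inseparable}.
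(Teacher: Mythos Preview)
Your argument is correct and follows essentially the same route as the paper: reduce to the finite case via filtered colimits, use the extension--restriction adjunction to see that both composites are multiplication by $d=p^m$, and then invoke coprimality of $d$ with $n$ (the paper phrases this as the groups being $\bbZ/n^2$-modules, which is equivalent). The only superfluous parts are your discussion of the variants (the paper handles those separately, after the main rigidity argument) and the worry about spectrum-level versus homotopy-group-level invertibility, which is not needed for the statement as given.
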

\begin{proof}
Let us assume first that the field extension $l/k$ is finite of degree $d$. As in the proof of Proposition \ref{prop:Galois}, we have the following two homomorphisms:
\begin{eqnarray*}
K_\ast(\cA;\bbZ/n) \stackrel{-\otimes_k l}{\too} K_\ast(\cA\otimes_k l; \bbZ/n) && K_\ast(\cA\otimes_k l; \bbZ/n) \stackrel{\mathrm{res}}{\too} K_\ast(\cA;\bbZ/n)\,.
\end{eqnarray*}
Clearly, the composition $\mathrm{res} \circ (-\otimes_k l)$ is equal to $d\cdot \id$. Similarly to the proof of Proposition \ref{prop:inseparable}, the converse composition $\mathrm{res}(-)\otimes_k l$ is also equal to $d\cdot \id$. Since the field extension $l/k$ is purely inseparable, the degree $d$ is a power of $\mathrm{char}(k)$. Therefore, making use of the fact that the (graded) abelian groups $K_\ast(\cA; \bbZ/n)$ and $K_\ast(\cA\otimes_k l; \bbZ/n)$ are $\bbZ/n^2$-modules and that $n$ is coprime to $\mathrm{char}(k)$, we conclude that the above induced homomorphism \eqref{eq:induced-last1} is invertible.

Let us now assume that the field extension $l/k$ is infinite. In this case, $l$ identifies with the colimit of the filtrant diagram $\{l_i \}_{i \in I}$ of all those intermediate field extensions $l/l_i/k$ which are finite and purely inseparable over $k$. Since $\mathrm{colim}_{i \in I} l_i \simeq l$, we have $\mathrm{colim}_{i \in I} \cA\otimes_k l_i\simeq \cA\otimes_k l$. Using the fact that the functor \eqref{eq:K-coefficients} preserves filtered colimits, we hence conclude that $\mathrm{colim}_{i \in I} K_\ast(\cA\otimes_k l_i; \bbZ/n)\simeq K_\ast(\cA\otimes_k l; \bbZ/n)$. Consequently, the proof follows from the preceding finite dimensional case.
\end{proof}
Consider the field extension $l_{\mathrm{alg}}/k_{\mathrm{alg}}$, where $l_{\mathrm{alg}}$ stands for ``the'' algebraic closure of $l$ and $k_{\mathrm{alg}}$ for the algebraic closure of $k$ inside $l_{\mathrm{alg}}$. Since $k$ and $l$ are separably closed, the extensions $l_{\mathrm{alg}}/l$ and $k_{\mathrm{alg}}/k$ are purely inseparable. Therefore, thanks to Proposition \ref{prop:inseparable2}, in order to prove that \eqref{eq:iso-3} is invertible it suffices to address the particular case where $l/k$ is a field extension of algebraically closed fields. 

We start by proving that \eqref{eq:iso-3} is injective. Note that $l$ identifies with the colimit of the filtrant diagram $\{k_i\}_{i \in I}$ of all those finitely generated $k$-algebras $k_i$ which are contained in $l$. Without loss of generality, we may assume that $k_i$ is integrally closed in its field of fractions. Hence, each such $k$-algebra $k_i$ corresponds to an irreducible smooth affine $k$-curve $\mathrm{Spec}(k_i)$. Since $\mathrm{colim}_{i \in I} k_i \simeq l$, we have $\mathrm{colim}_{i \in I} \cA\otimes_k k_i \simeq \cA\otimes_k l$. Using the fact that the functor \eqref{eq:K-coefficients} preserves filtered (homotopy) colimits, we hence obtain the following isomorphism: 
\begin{equation}\label{eq:colim}
\mathrm{colim}_{i \in I} K_\ast(\cA\otimes_k k_i; \bbZ/n) \simeq K_\ast(\cA\otimes_k l; \bbZ/n)\,.
\end{equation}
By assumption, the field $k$ is algebraically closed. Therefore, the Hilbert's nullstellensatz theorem implies that all such smooth affine curves $\mathrm{Spec}(k_i)$ admit a rational point $p_i\colon \mathrm{Spec}(k) \to \mathrm{Spec}(k_i)$. Consequently, the following compositions
\begin{eqnarray*}
K_\ast(\cA;\bbZ/n) \stackrel{-\otimes_k k_i}{\too} K_\ast(\cA\otimes_k k_i; \bbZ/n) \stackrel{-\otimes_{k_i}^{\bf L}k}{\too} K_\ast(\cA;\bbZ/n) && i \in I
\end{eqnarray*}
are equal to the identity. This shows, in particular, that the homomorphisms $-\otimes_k k_i$ are injective. Making use of the above isomorphism \eqref{eq:colim}, we hence conclude that the homomorphism \eqref{eq:iso-3} is also injective.

We now prove that \eqref{eq:iso-3} is surjective. Let $\alpha$ be an element of $K_\ast(\cA\otimes_k l;\bbZ/n)$. Thanks to the above isomorphism \eqref{eq:colim}, there exists an index $i_o \in I$ and an element $\alpha_{i_o}$ of $K_\ast(\cA\otimes_k k_i;\bbZ/n)$ such that $\alpha_{i_o}\otimes^{\bf L}_{k_{i_o}} l=\alpha$. Choose a rational point $p\colon \mathrm{Spec}(k) \to \mathrm{Spec}(k_{i_o})$ of the smooth affine $k$-curve $\mathrm{Spec}(k_{i_o})$ and consider the following commutative diagram:
\begin{equation}\label{eq:22-squares}
\xymatrix{ 
 l & k_{i_o}\otimes_k l \ar[l]_-{\overline{p}} \ar[r] & l \\
 k \ar[u] & k_{i_o} \ar[l]_-p \ar[u] \ar[r] & l \ar@{=}[u]
}
\end{equation}
The two upper horizontal maps in \eqref{eq:22-squares} may be understood as two rational points of the smooth affine $l$-curve $\mathrm{Spec}(k_{i_o}\otimes_k l)$. Hence, thanks to Proposition \ref{prop:rigidity} below, the associated homomorphisms from $K_\ast(\cA\otimes_k (k_{i_o}\otimes_k l); \bbZ/n)$ to $K_\ast(\cA\otimes_k l ;\bbZ/n)$ agree. This implies that $(\alpha_{i_o}\otimes^{\bf L}_{k_{i_o}}k)\otimes_k l = \alpha_{i_o}\otimes^{\bf L}_{k_{i_o}} l = \alpha$. Since $\alpha_{i_o}\otimes^{\bf L}_{k_{i_o}}k$ belongs to $K_\ast(\cA;\bbZ/n)$, we then conclude that the homomorphism \eqref{eq:iso-3} is surjective.

\begin{proposition}\label{prop:rigidity}
Let $C=\mathrm{Spec}(R)$ be a smooth affine $l$-curve. Given any two rational points $p, q \colon \mathrm{Spec}(l) \to C$, the associated homomorphisms $\id \otimes p^\ast = -\otimes^{\bf L}_R l$ and $\id \otimes q^\ast = -\otimes_R^{\bf L} l$ from $K_\ast(\cA\otimes_k R; \bbZ/n)$ to $K_\ast(\cA\otimes_k l ; \bbZ/n)$ agree.
\end{proposition}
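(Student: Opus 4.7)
The plan is to adapt Suslin's classical rigidity argument to the noncommutative setting. By the reduction preceding the proposition (which brings us to the case where $l$ is algebraically closed), we may assume $l$ is algebraically closed. Let $\overline{C}$ denote the smooth projective completion of $C$, so $C = \overline{C}\setminus S$ for some finite subset $S\subset\overline{C}(l)$. The divisor $[p]-[q]$ has degree zero, hence defines a class in the Jacobian $\mathrm{Pic}^0(\overline{C})(l)$. Since $l$ is algebraically closed and $n$ is coprime to $\mathrm{char}(l)$, this group is $n$-divisible; a standard moving argument then produces effective divisors $E^+, E^-$ supported in $C\setminus\{p,q\}$ together with a rational function $f\in l(\overline{C})^\times$ such that $\mathrm{div}(f) = ([p]+nE^+) - ([q]+nE^-)$. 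In particular, $f$ provides a finite flat morphism $\overline{C}\to \bbP^1_l$ whose scheme-theoretic fibers over $0$ and $\infty$ are $p+nE^+$ and $q+nE^-$, respectively.

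The second ingredient is a transfer formalism for mod-$n$ $K$-theory of dg categories. For every finite flat morphism $\pi\colon \mathrm{Spec}(R')\to\mathrm{Spec}(R)$ of smooth affine $l$-schemes, the restriction-of-scalars functor $\cD_c(\cA\otimes_k R')\to \cD_c(\cA\otimes_k R)$ (well-defined because $\cA\otimes_k R'$ is perfect over $\cA\otimes_k R$ whenever $R'/R$ is finite flat) induces a transfer
$$ \pi_\ast\colon K_\ast(\cA\otimes_k R';\bbZ/n)\too K_\ast(\cA\otimes_k R;\bbZ/n) $$
satisfying the projection formula $\pi_\ast\pi^\ast = \deg(\pi)\cdot\id$. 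Applying this transfer to the fibers of $f$ over $0,\infty\in \bbP^1_l$ and exploiting $\bbA^1$-invariance of mod-$n$ $K$-theory for smooth affine $l$-algebras (which identifies the two pullbacks along the zero and infinity sections $\mathrm{Spec}(l)\hookrightarrow\bbP^1_l$), one obtains an identity of the form
$$ p^\ast\alpha - q^\ast\alpha = n\cdot\gamma\quad\text{in}\quad K_\ast(\cA\otimes_k l;\bbZ/n), $$
where $\gamma$ is built from the transfers attached to $E^\pm$. Since $n$ is coprime to $\mathrm{char}(l)$, the usual Moore-spectrum arguments (with the customary refinement to $\bbZ/n^\nu$-coefficients to handle the mod-$2$ subtleties) make multiplication by $n$ act trivially on mod-$n$ $K$-theory, yielding $p^\ast=q^\ast$.

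The principal obstacle is verifying that the two key ingredients extend from the scheme setting to the dg-category-valued presheaf $R\mapsto K_\ast(\cA\otimes_k R;\bbZ/n)$. The transfers are formal, coming from the perfectness of $\cA\otimes_k R'$ over $\cA\otimes_k R$ and the functoriality of nonconnective $K$-theory of dg categories under such pushforwards, together with the classical projection formula. The $\bbA^1$-invariance is more delicate since $K$-theory of arbitrary dg categories is not $\bbA^1$-invariant, but after smashing with the Moore spectrum $\bbS/n$ (with $n$ coprime to $\mathrm{char}(l)$) it is restored for dg categories of the form $\cA\otimes_k R$ with $R$ a smooth affine $l$-algebra; this is essentially the content of the Suslin-Panin-Yagunov-\O{}stv{\ae}r rigidity framework, and its transplant to the noncommutative setting is what the proposition encapsulates.
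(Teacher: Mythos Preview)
Your approach is Suslin's rigidity argument, as is the paper's. The paper packages it slightly differently---defining a homomorphism $\theta\colon\mathrm{Div}(C)\to\Hom(K_\ast(\cA\otimes_k R;\bbZ/n),K_\ast(\cA\otimes_k l;\bbZ/n))$, showing in a separate lemma that $\theta$ factors through the relative Picard group $\mathrm{Pic}(\overline{C},C_\infty)$ (using the Rosenlicht Jacobian rather than the ordinary one, which automatically keeps the relevant rational functions away from $C_\infty$ and lets one restrict to functions unramified over $0,\infty$), and then invoking divisibility---but the skeleton is identical: transfers, base-change, $\bbA^1$-invariance, and divisibility of a degree-zero Picard group.

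Two points need tightening. First, your last paragraph flirts with circularity: you write that the noncommutative $\bbA^1$-invariance ``is what the proposition encapsulates,'' but it is an \emph{input} to the argument, not its conclusion. The paper imports it from an external source (\cite{Klein}, Thm.~1.2(i)), which establishes that $K(-;\bbZ/n)$ is $\bbA^1$-homotopy invariant on \emph{all} dg categories whenever $n$ is coprime to the characteristic; you must cite or prove this independently, and without it the step ``$\bbA^1$-invariance identifies the two pullbacks along $0,\infty$'' has no content. Second, $K_\ast(-;\bbZ/n)$ is in general only $n^2$-torsion, not $n$-torsion (your parenthetical about mod-$2$ subtleties acknowledges this without resolving it), so $n\cdot\gamma$ need not vanish. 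The paper's remedy is to use $n^2$-torsion of the target together with $m$-divisibility of $\mathrm{Pic}^0(\overline{C},C_\infty)$ for every $m$ coprime to $\mathrm{char}(k)$; in your formulation, simply replace $n$-divisibility of $\mathrm{Pic}^0(\overline{C})$ by $n^2$-divisibility and the argument goes through.
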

\begin{proof}
Consider the following homomorphism 
\begin{eqnarray*}
\mathrm{Div}(C) \stackrel{\theta}{\too} \Hom(K_\ast(\cA\otimes_k R; \bbZ/n),K_\ast(\cA\otimes_k l; \bbZ/n)) && p \mapsto (\id \otimes p^\ast)\,,
\end{eqnarray*}
where $\mathrm{Div}(C)$ stands for the abelian group of divisors on $C$. Choose a smooth compactification\footnote{Recall that the smooth compactification $\overline{C}$ is unique up to isomorphism.} $\overline{C}$ of $C$, with closed complement $C_\infty$ consisting of a finite set of points, and consider the relative Picard group $\mathrm{Pic}(\overline{C},C_\infty)$. Recall that $\mathrm{Pic}(\overline{C}, C_\infty)$ is defined as the quotient of $\mathrm{Div}(C)$ by the following equivalence relation: $D\sim D'$ if there exists a rational function $f\colon \overline{C} \to \bbP^1$ such that $f_{|C_\infty}=1$, $f^{-1}(0)=D$ and $f^{-1}(\infty)=D'$. Thanks to Lemma \ref{lem:key} below, $\theta$ factors through $\mathrm{Pic}(\overline{C},C_\infty)$. Since the (graded) abelian group $\Hom(K_\ast(\cA\otimes_k R; \bbZ/n),K_\ast(\cA\otimes_k l; \bbZ/n))$ is $n^2$-torsion, the homomorphism $\theta$ factors moreover through the quotient $\mathrm{Pic}(\overline{C},C_\infty)/n^2$. Now, using the fact that the kernel $\mathrm{Pic}^0(\overline{C}, C_\infty)$ of the degree map $\mathrm{deg}\colon \mathrm{Pic}(\overline{C},C_\infty) \to \bbZ$ is a $n$-divisible group (since it agrees with the group of $k$-points of the Rosenlicht Jacobian of $\overline{C}$) and that the difference $p-q$ belongs to $\mathrm{Pic}^0(\overline{C},C_\infty)$, we hence conclude that $(\id \otimes p^\ast)=(\id \otimes q^\ast)$.
\end{proof}
\begin{lemma}\label{lem:key}
The above homomorphism $\theta$ factors through $\mathrm{Pic}(\overline{C}, C_\infty)$.
\end{lemma}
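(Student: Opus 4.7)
The plan is to mimic the classical Suslin--Voevodsky transfer argument for rigidity in mod-$n$ algebraic $K$-theory, adapted to the $\cA$-coefficient setting. Let $f\colon \overline{C}\to\bbP^1$ be the rational function witnessing $D\sim D'$, with $f|_{C_\infty}=1$, $f^{-1}(0)=D$, and $f^{-1}(\infty)=D'$. The key observation is that $f|_{C_\infty}=1$ ensures $D$ and $D'$ are disjoint from each other and from $C_\infty$; setting $U:=\overline{C}\setminus D'$ and $V:=\overline{C}\setminus D$, the morphisms $f\colon U\to\bbA^1_l$ and $1/f\colon V\to\bbA^1_l$ are finite flat morphisms of smooth affine $l$-curves. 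Writing $S:=\Gamma(U,\cO_U)$ and $T:=\Gamma(V,\cO_V)$, this translates into finite flat $l$-algebra extensions $l[t]\hookrightarrow S$ (via $t\mapsto f$) and $l[t]\hookrightarrow T$ (via $t\mapsto 1/f$); both $S$ and $T$ share the common localization $\Gamma(U\cap V)=R^\circ:=R[D^{-1},D'^{-1}]$.

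Since a finite flat extension $l[t]\hookrightarrow B$ makes $\cA\otimes_k B$ a perfect bimodule over $\cA\otimes_k l[t]$, the corresponding restriction-of-scalars dg functor induces a transfer map $f_\ast\colon K_\ast(\cA\otimes_k S;\bbZ/n)\to K_\ast(\cA\otimes_k l[t];\bbZ/n)$, and analogously $(1/f)_\ast$ for $T$. The projection/base-change formula for such transfers, applied at the rational point $0\in\bbA^1(l)$, should yield the identities
$$
0^\ast\circ f_\ast\,=\,\sum_p n_p\,p^\ast,\qquad 0^\ast\circ(1/f)_\ast\,=\,\sum_q m_q\,q^\ast,
$$
where $p^\ast$ (resp.\ $q^\ast$) denotes the pullback along the rational point $p\in U(l)$ (resp.\ $q\in V(l)$) and $D=\sum n_p p$, $D'=\sum m_q q$. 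Since each $p\in D$ lies in $C\cap U$ and each $q\in D'$ lies in $C\cap V$, Zariski/Mayer--Vietoris descent for $K_\ast(\cA\otimes_k-;\bbZ/n)$ allows us to view these composites as endomorphisms governing $\theta(D)$ and $\theta(D')$, once we pass to the common localization $R^\circ$. Finally, invoking $\bbA^1$-homotopy invariance of mod-$n$ algebraic $K$-theory (so that the pullback $a^\ast$ is independent of $a\in\bbA^1(l)$), together with the fact that on $U\cap V$ the functions $f$ and $1/f$ are mutually inverse units and are related by the automorphism $t\mapsto 1/t$ of $\bbG_m\subset\bbA^1$, one deduces $\sum_p n_p\,p^\ast=\sum_q m_q\,q^\ast$, i.e.\ $\theta(D)=\theta(D')$.

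The main technical obstacle will be establishing the projection / base-change formula for the transfer $f_\ast$ in the dg-categorical setting: in the classical commutative case this is standard (Quillen), but here one needs to verify that restriction of scalars along $l[t]\hookrightarrow S$ is compatible with $\cA\otimes_k-$ and with base change along any rational point $a\colon\mathrm{Spec}(l)\to\bbA^1$. A more bookkeeping obstacle is that $R$ does not map directly into $S$ or $T$; this is circumvented by working with the common localization $R^\circ$ and appealing to Zariski descent, at the cost of carefully checking that the pullbacks $p^\ast$ and $q^\ast$ are insensitive to which open of $\overline{C}$ around the rational point one uses to represent the source $K$-theory class.
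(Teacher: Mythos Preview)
Your proposal assembles the right ingredients (finite flat transfer, base-change, $\bbA^1$-homotopy invariance), but the decomposition into the two opens $U=\overline{C}\setminus D'$ and $V=\overline{C}\setminus D$ creates a genuine gap. First, a geometric slip: the identification $\Gamma(U\cap V)=R[D^{-1},D'^{-1}]$ is false, since $U\cap V=\overline{C}\setminus(D\cup D')$ still contains $C_\infty$, whereas $\Spec R[D^{-1},D'^{-1}]=C\setminus(D\cup D')$ does not. More seriously, neither $U$ nor $V$ is contained in $C$, so there is no restriction $R\to S$ or $R\to T$, and your final comparison step does not close: once you pass to $U\cap V$ the map $f$ lands in $\bbG_m$, the fibres over $0$ and $\infty$ are \emph{empty} there, and the base-change identities you wrote collapse to $0=0$. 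The ``automorphism $t\mapsto 1/t$ of $\bbG_m$'' does not bridge the two transfers, because $0^\ast\circ f_\ast$ and $0^\ast\circ(1/f)_\ast$ live over different sources and neither survives restriction to $U\cap V$.

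The paper sidesteps all of this with one clean choice of open: set $\cU:=f^{-1}(\bbP^1\setminus\{1\})$. Because $f|_{C_\infty}=1$, one has $C_\infty\subset f^{-1}(1)$, hence $\cU\subset C$ and there is an honest restriction $R\to S:=\Gamma(\cU,\cO_\cU)$. Moreover $f$ restricts to a \emph{single} finite flat map $\cU\to\bbP^1\setminus\{1\}=\bbA^1$ whose fibres over $0$ and $\infty$ are precisely $D_0$ and $D_\infty$. After first reducing to $f$ unramified over $0$ and $\infty$ (possible since $l$ is algebraically closed, so such divisors generate $\mathrm{Pic}(\overline{C},C_\infty)$; you omit this step), flat base-change gives $\theta(D_0)=(\id\otimes i_0^\ast)\circ(\id\otimes f_\ast)$ and $\theta(D_\infty)=(\id\otimes i_\infty^\ast)\circ(\id\otimes f_\ast)$ as maps out of the \emph{same} source $K_\ast(\cA\otimes_k S;\bbZ/n)$, and $\bbA^1$-homotopy invariance of mod-$n$ $K$-theory yields $i_0^\ast=i_\infty^\ast$ directly. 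No gluing, no Mayer--Vietoris, no second open.
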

\begin{proof}
Since the field $l$ is algebraically closed, the relative Picard group $\mathrm{Pic}(\overline{C},C_\infty)$ is generated by the unramified divisors on $C$. Hence, it suffices to show that the homomorphism $\theta$ vanishes on the principal divisors $\mathrm{div}(f):=f^{-1}(0)-f^{-1}(\infty)$ associated to those rational functions $f\colon \overline{C} \to \bbP^1$, with $f_{|C_\infty}=1$, which are unramified over $0$ and $\infty$. Let $D_0:=f^{-1}(0)$, $D_\infty:=f^{-1}(\infty)$, and $\cU:=f^{-1}(\bbP^1\backslash \{1\})$. By definition, $\cU:=\mathrm{Spec}(S)$ is an affine open subscheme of $C$ which contains $D_0$ and $D_\infty$. Moreover, $f$ restricts to a finite flat map $f\colon \cU \to \bbP^1\backslash \{1\}=\bbA^1$. Hence, without loss of generality, we can replace $K_\ast(\cA\otimes_k R; \bbZ/n)$ by $K_\ast(\cA\otimes_k S; \bbZ/n)$.

Consider the following commutative diagrams:
$$
\xymatrix{
D_0=f^{-1}(0) \ar[r]^-{j_0} \ar[d]_-{f_0} & \cU \ar[d]^-f && D_\infty= f^{-1}(\infty) \ar[d]_-{f_\infty} \ar[r]^-{j_\infty} & \cU \ar[d]^-f \\
\mathrm{Spec}(l) \ar[r]^-{i_0} & \bbA^1 && \mathrm{Spec}(l) \ar[r]^-{i_\infty} & \bbA^1\,.
}
$$ 
Thanks to flat proper base-change, they yield commutative diagrams
$$
\xymatrix{
\perf_\dg(D_0) \ar[d]_-{(f_0)_\ast}  & \perf_\dg(\cU) \ar[l]_-{j_0^\ast} \ar[d]^-{f_\ast} & \perf_\dg(D_\infty) \ar[d]_-{(f_\infty)_\ast}  & \perf_\dg(\cU) \ar[l]_-{j_\infty^\ast} \ar[d]^-{f_\ast} \\
\perf_\dg(\mathrm{Spec}(l)) & \perf_\dg(\bbA^1) \ar[l]_-{i_0^\ast}& \perf_\dg(\mathrm{Spec}(l))  & \perf_\dg(\bbA^1) \ar[l]_-{i_\infty^\ast}
}
$$
in the homotopy category $\Hmo(k)$. Since $f$ is unramified over $0$ and $\infty$ and $D_0 = \amalg_{f^{-1}(0)}\mathrm{Spec}(l)$ and $D_\infty=\amalg_{f^{-1}(\infty)}\mathrm{Spec}(l)$, the push-forward dg functors $(f_0)_\ast$ and $(f_\infty)_\ast$ corresponds to the fold identity dg functors from $\amalg_{f^{-1}(0)} \perf_\dg(\mathrm{Spec}(l))$ and $\amalg_{f^{-1}(\infty)} \perf_\dg(\mathrm{Spec}(l))$ to $\perf_\dg(\mathrm{Spec}(l))$, respectively. Consequently, making use of the Morita equivalences $\perf_\dg(\mathrm{Spec}(l))\simeq l$ and $\perf_\dg(\cU)\simeq S$, by applying the functor $K_\ast(\cA\otimes_k -; \bbZ/n)$ to the preceding commutative diagrams, we conclude that
\begin{eqnarray*}
\theta(D_0) & =& (\id \otimes (f_0)_\ast)\circ (\id \otimes j_0^\ast) = (\id \otimes i_0^\ast) \circ (\id \otimes f_\ast)\\
\theta(D_\infty) & =&  (\id \otimes (f_\infty)_\ast)\circ (\id \otimes j_\infty^\ast) = (\id \otimes i_\infty^\ast) \circ (\id \otimes f_\ast)\,.
\end{eqnarray*}
As proved in \cite[Thm.~1.2(i)]{Klein}, since $n$ is coprime to $\mathrm{char}(k)$, the functor \eqref{eq:K-coefficients} is $\bbA^1$-homotopy invariant. Hence, the equality $\id \otimes i_0^\ast = \id \otimes i_\infty^\ast$ holds. This allows us to conclude that $\theta(\mathrm{div}(f))=\theta(D_0 - D_\infty)=0$, and so the proof is finished.
\end{proof}
%

Finally, we prove that the isomorphism \eqref{eq:iso-3} also holds for the variants $K_\ast(-)^\wedge_n$, $K^{\mathrm{et}}_\ast(-;\bbZ/n)$, $K_\ast(-)/n$ and ${}_n K_\ast(-)$. The cases of $n$-adic algebraic $K$-theory and mod-$n$ {\'e}tale $K$-theory follow automatically from their definition. In what concerns the other two variants, note that the above proof of the injectivity of \eqref{eq:iso-3} holds {\em mutatis mutandis} with mod-$n$ algebraic $K$-theory replaced by nonconnective algebraic $K$-theory. This implies, in particular, that the base-change homomorphisms
\begin{eqnarray}\label{eq:induced-last}
& -\otimes_k l \colon K_\ast(\cA)/n \too K_\ast(\cA \otimes_k l)/n & -\otimes_k l \colon {}_nK_\ast(\cA) \too {}_n K_\ast(\cA\otimes_k l)
\end{eqnarray}
are injective. Making use of the following commutative diagrams
$$
\xymatrix{
0 \ar[r] & K_\ast(\cA\otimes_k l)/n \ar[r] & K_\ast(\cA\otimes_k l; \bbZ/n) \ar[r] & {}_n K_{\ast-1}(\cA\otimes_k l) \ar[r] & 0 \\
0 \ar[r] & K_\ast(\cA)/n \ar[u]^-{-\otimes_k l} \ar[r] & K_\ast(\cA; \bbZ/n) \ar[u]^-{-\otimes_k l}_-\simeq \ar[r] & {}_n K_{\ast-1}(\cA) \ar[r] \ar[u]^-{-\otimes_k l}& 0
}
$$
and of the snake lemma, we hence conclude that the base-change homomorphisms \eqref{eq:induced-last} are moreover surjective. This concludes the proof of Theorem \ref{thm:main}.
\section{$n$-adic noncommutative mixed motives}\label{sec:adic}
Recall from \cite[\S8.3]{book} the construction of the closed symmetric monoidal Quillen model category $\mathrm{NMot}(k):=L_{\mathrm{loc}}\Fun(\dgcat_{\mathrm{f}}(k)^\op, \Spt)$ and of the associated symmetric monoidal functor $\mathrm{U}\colon \dgcat(k) \to \NMot(k)$. The triangulated category of noncommutative mixed motives $\mathrm{NMix}(k)$ is defined as the smallest thick triangulated subcategory of $\Ho(\mathrm{NMot}(k))$ containing the objects $\mathrm{U}(\cA)$ with $\cA$ a smooth proper dg category; see \cite[\S9.1]{book}. By construction, the category $\Ho(\NMot(k))$, and hence $\mathrm{NMix}(k)$, is enriched over $\Ho(\Spt)$.

Let $L_{\bbS/n}(\Spt)$ be the closed symmetric monoidal Quillen model category of $\bbS/n$-local symmetric spectra, $\NMot(k)^\wedge_n$ the closed symmetric monoidal Quillen model category $L_{\mathrm{loc}}\Fun(\dgcat_{\mathrm{f}}(k)^\op, L_{\bbS/n}(\Spt))$, and $\mathrm{U}(-)^\wedge_n\colon \dgcat(k) \to \NMot(k)^\wedge_n$ the associated symmetric monoidal functor. The triangulated category of {\em $n$-adic noncommutative mixed motives $\mathrm{NMix}(k)^\wedge_n$} is defined as the smallest thick triangulated subcategory of $\Ho(\NMot(k)^\wedge_n)$ containing the objects $\mathrm{U}(\cA)^\wedge_n$ with $\cA$ a smooth proper dg category. As proved in \cite[Thm.~1.43]{book}, the smooth proper dg categories can be characterized as the strongly dualizable objects of the symmetric monoidal category $\Hmo(k)$; the dual of a smooth proper dg category $\cA$ is given by the opposite dg category $\cA^\op$. Since the functor $\mathrm{U}(-)^\wedge_n$ is symmetric monoidal, we hence conclude that the objects $\mathrm{U}(\cA)^\wedge_n$, with $\cA$ smooth proper, are strongly~dualizable and consequently that the symmetric monoidal category $\mathrm{NMix}(k)^\wedge_n$ is rigid.
\begin{proposition}
Given dg categories $\cA$ and $\cB$, with $\cA$ smooth and proper, we have a natural isomorphism of (symmetric) spectra:
\begin{equation}\label{eq:representability1}
{\bf R}\Hom(\mathrm{U}(\cA)^\wedge_n, \mathrm{U}(\cB)^\wedge_n) \simeq K(\cA^\op \otimes \cB)^\wedge_n\,.
\end{equation}
\end{proposition}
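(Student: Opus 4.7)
The plan is to deduce this $n$-adic representability statement from its (already established) integral analog, combined with the compatibility of the $\bbS/n$-Bousfield localization with the construction of $\mathrm{NMot}(k)$. First I would invoke the integral representability theorem for noncommutative mixed motives, namely that when $\cA$ is smooth proper there is a natural isomorphism of spectra ${\bf R}\Hom(\mathrm{U}(\cA), \mathrm{U}(\cB)) \simeq K(\cA^\op \otimes^{\bf L} \cB)$; this is proved in \cite[Chapter 9]{book} by combining the Yoneda-type identification valid in $\Ho(\mathrm{NMot}(k))$ after localization, the identification $\mathrm{rep}_\dg(\cA,\cB) \simeq \cA^\op \otimes^{\bf L} \cB$ available under the smooth proper hypothesis, and Waldhausen's characterization of $K$-theory. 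Alternatively, strong dualizability of $\mathrm{U}(\cA)^\wedge_n$ (with dual $\mathrm{U}(\cA^\op)^\wedge_n$), combined with the fact that $\mathrm{U}(-)^\wedge_n$ is symmetric monoidal, reduces the statement to the case $\cA = k$, i.e.\ to showing that ${\bf R}\Hom(\mathrm{U}(k)^\wedge_n, \mathrm{U}(\cC)^\wedge_n) \simeq K(\cC)^\wedge_n$ for an arbitrary dg category $\cC$.

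Second, by construction $\mathrm{NMot}(k)^\wedge_n = L_{\mathrm{loc}}\Fun(\dgcat_{\mathrm{f}}(k)^\op, L_{\bbS/n}(\Spt))$ is obtained from $\mathrm{NMot}(k) = L_{\mathrm{loc}}\Fun(\dgcat_{\mathrm{f}}(k)^\op, \Spt)$ simply by replacing the target model category of spectra $\Spt$ by its left Bousfield localization $L_{\bbS/n}(\Spt)$, before applying the same localizing Bousfield reflection. Since both steps are left Bousfield localizations, they commute, so $\mathrm{NMot}(k)^\wedge_n$ is itself (Quillen equivalent to) the $\bbS/n$-Bousfield localization of $\mathrm{NMot}(k)$ at the coefficient level. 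Consequently the derived mapping spectrum computes as
$$ {\bf R}\Hom(\mathrm{U}(\cA)^\wedge_n, \mathrm{U}(\cB)^\wedge_n) \simeq L_{\bbS/n} {\bf R}\Hom(\mathrm{U}(\cA), \mathrm{U}(\cB)). $$
Combining this with the integral representability gives ${\bf R}\Hom(\mathrm{U}(\cA)^\wedge_n, \mathrm{U}(\cB)^\wedge_n) \simeq L_{\bbS/n} K(\cA^\op \otimes \cB)$.

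To finish, I would observe that for any $K$-theory spectrum the $\bbS/n$-localization coincides with the $n$-adic completion $L_{\bbS/n} K(-) \simeq \mathrm{holim}_\nu K(-; \bbZ/n^\nu) = K(-)^\wedge_n$, which is the content of Bousfield's classical computation of $\bbS/n$-localizations of spectra (applicable here by the mild finiteness properties of $K$-theory spectra). Applied to $\cA^\op \otimes \cB$, this yields the stated isomorphism. The main technical obstacle is the verification that the $\bbS/n$-Bousfield localization at the coefficient level commutes, as a left Bousfield localization, with the localizing Bousfield reflection defining $\mathrm{NMot}(k)$, in such a way that the resulting identification of derived mapping spectra is natural at the model-categorical level (and not merely in the homotopy category). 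This is standard but delicate; all other steps are essentially formal given the representability theorem of \cite{book}.
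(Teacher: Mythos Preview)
Your proposal is correct and follows essentially the same approach as the paper. Both arguments hinge on the adjunction between $\NMot(k)$ and its $\bbS/n$-localization $\NMot(k)^\wedge_n$, the identification of the latter's fibrant replacement with $\mathrm{holim}_\nu(-)/n^\nu$, and the integral representability theorem from \cite{book}; the only cosmetic difference is that the paper dualizes first (reducing to $\cA=k$ and invoking \cite[Thm.~8.28]{book}) and then writes out the $\mathrm{holim}$ explicitly, whereas you package the same computation as ``apply $L_{\bbS/n}$ to the integral mapping spectrum'' and invoke the full smooth-proper representability directly.
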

\begin{proof}
Since $\cA$ is smooth and proper, the object $\mathrm{U}(\cA)^\wedge_n$ is strongly dualizable with dual $\mathrm{U}(\cA^\op)^\wedge_n$. Hence, we have the following isomorphisms
\begin{eqnarray}
{\bf R}\Hom(\mathrm{U}(\cA)^\wedge_n, \mathrm{U}(\cB)^\wedge_n) & \simeq & {\bf R} \Hom(\mathrm{U}(k)^\wedge_n, \mathrm{U}(\cA^\op)^\wedge_n \otimes \mathrm{U}(\cB)^\wedge_n) \nonumber\\
& \simeq & {\bf R}\Hom(\mathrm{U}(k)^\wedge_n, \mathrm{U}(\cA^\op \otimes B)^\wedge_n)\nonumber\\
& \simeq & {\bf R}\Hom(\mathrm{U}(k), \mathrm{holim}_\nu \mathrm{U}(\cA^\op \otimes \cB)/n^\nu) \label{eq:star-4444}\\
& \simeq & \mathrm{holim}_\nu {\bf R}\Hom(\mathrm{U}(k), \mathrm{U}(\cA^\op \otimes \cB))/n^\nu \nonumber\\
& \simeq & \mathrm{holim}_\nu K(\cA^\op \otimes \cB)/n^\nu =: K(\cA^\op \otimes \cB)^\wedge_n\,, \label{eq:star-5555}
\end{eqnarray}
where \eqref{eq:star-4444} follows from the induced adjunction between $\NMot(k)$ and $\NMot(k)^\wedge_n$ and \eqref{eq:star-5555} from \cite[Thm.~8.28]{book}.
\end{proof}
\subsection*{Proof of Theorem \ref{thm:NCmotivic}}
By construction, every object of $\NNum(k)_\bbQ$ is, up to a direct summand, of the form $U(\cA)_\bbQ$; see Remark \ref{rk:false}. Moreover, as explained in \cite[\S4.7]{book}, we have natural isomorphisms of $\bbQ$-vector spaces:
$$\Hom(U(\cA)_\bbQ,U(\cB)_\bbQ) \simeq K_0(\cA^\op \otimes \cB)_\bbQ/_{\!\!\sim \mathrm{num}}\,.$$ Therefore, the proof of item (i) follows from Theorem \ref{thm:main}(i).

By construction, the triangulated category $\mathrm{NMix}(k;\bbZ/n)$ comes equipped with a functor $\mathrm{U}(-;\bbZ/n)\colon \dgcat_{\mathrm{sp}}(k) \to \NNum(k; \bbZ/n)$. Moreover, it is generated by the objects of the form $\mathrm{U}(\cA;\bbZ/n)$. Furthermore, as explained in \cite[Prop.~4.5]{Hopf}, we have natural isomorphisms of (symmetric) spectra
\begin{equation}\label{eq:representability2}
{\bf R} \Hom(\mathrm{U}(\cA;\bbZ/n), \mathrm{U}(\cB;\bbZ/n))\simeq K(\cA^\op \otimes \cB) \wedge \mathrm{H}(\bbZ/n)\,,
\end{equation}
where $\mathrm{H}(\bbZ/n)$ stands for the Eilenberg-MacLane spectrum of $\bbZ/n$. Therefore, the fully-faithfulness of the base-change functor $-\otimes_k l \colon \mathrm{NMix}(k;\bbZ/n) \to \mathrm{NMix}(l;\bbZ/n)$ follows from Theorem \ref{thm:main}(ii). The proof of the fully-faithfulness of the base-change functor $-\otimes_k l \colon \mathrm{NMix}(k)^\wedge_n \to \mathrm{NMix}(l)^\wedge_n$ is similar; simply replace \eqref{eq:representability2} by \eqref{eq:representability1}.
\begin{remark}[Alternative proof]\label{rk:alternative}
The commutative counterpart of Theorem \ref{thm:NCmotivic}(i) (where the category $\NNum(k)_\bbQ$ is replaced by the classical category of numerical motives $\Num(k)_\bbQ$) was established by Kahn in \cite[Prop.~5.5]{Kahn}. Here is an alternative proof: as explained in \cite[\S4.6-4.7 and \S4.10]{book}, we have the commutative~diagram
\begin{equation}\label{eq:diagram-big}
\xymatrix{
\Num(k)_\bbQ \ar[d] \ar[r]^-{-\otimes_k l} & \Num(l)_\bbQ \ar[d] \\
\Num(k)_\bbQ/_{\!\! -\otimes \bbQ(1)} \ar[r]^-{-\otimes_k l} \ar[d]_-\Phi & \Num(l)_\bbQ/_{\!\! -\otimes \bbQ(1)} \ar[d]^-\Phi \\
\NNum(k)_\bbQ \ar[r]^-{-\otimes_k l} & \NNum(l)_\bbQ\,,
}
\end{equation}
where $\Num(k)_\bbQ/_{\!\! -\otimes \bbQ(1)}$ stands for the orbit category with respect to the Tate motive $\bbQ(1)$. Since the functor $\Phi$ is fully-faithful, it follows then from the combination of Theorem \ref{thm:NCmotivic}(i) with the definition of the orbit category that the upper base-change functor $-\otimes_k l$ in \eqref{eq:diagram-big} is also fully-faithful.
\end{remark}

\medbreak\noindent\textbf{Acknowledgments:} The author is grateful to Joseph Ayoub and Ivan Panin for useful discussions, to Oliver R\"ondigs and Paul Arne {\O}stv{\ae}r for references, and to  Charles Vial for comments on a previous version. The author is also thankful to the Mittag-Leffler Institute for its hospitality.


\begin{thebibliography}{00}

\bibitem{Browder} W.~Browder, {\em Algebraic K-theory with coefficients $\bbZ/p$}. Geometric applications of homotopy theory (Proc. Conf., Evanston, Ill., 1977), I, Lecture Notes in Math., vol. {\bf 657}, pp. 40--84.

\bibitem{DM} P.~Deligne and J.~S.~Milne, {\em Tannakian categories}. In: Hodge Cycles, Motives, and Shimura Varieties.
Lecture Notes in Mathematics, vol. {\bf 900}, pp. 101--228. Updated online version available at \url{http://www.
jmilne.org/math/xnotes/tc.pdf} (1982).





\bibitem{HH} C.~Haesemeyer and  J.~Hornbostel, {\em Motives and \'etale motives with finite coefficients}. K-Theory {\bf 34} (2005), no.~3, 195--207.


\bibitem{Kahn} B.~Kahn, {\em Motifs et adjoints}. Available at arXiv:1506.08386.

\bibitem{ICM-Keller} B.~Keller, {\em On differential graded categories}. International Congress of Mathematicians (Madrid), Vol.~II,  151--190. Eur.~Math.~Soc., Z{\"u}rich (2006).

\bibitem{Miami} M.~Kontsevich, {\em Mixed noncommutative motives}. Talk at the Workshop on Homological Mirror Symmetry,  Miami, 2010. Notes available at \url{www-math.mit.edu/auroux/frg/miami10-notes}. 

\bibitem{finMot} \bysame, {\em Notes on motives in finite characteristic}.  Algebra, arithmetic, and geometry: in honor of Yu. I. Manin. Vol. II,  213--247, Progr. Math., {\bf 270}, Birkh\"auser Boston, MA, 2009.  

\bibitem{IAS} \bysame, {\em Noncommutative motives}. Talk at the IAS on the occasion of the $61^{\mathrm{st}}$ birthday of Pierre Deligne (2005). Available at \url{http://video.ias.edu/Geometry-and-Arithmetic}.    


\bibitem{Kuznetsov} A.~Kuznetsov, {\em Exceptional collections in surface-like categories}. Available at arXiv:1703.07812.

\bibitem{Quillen} D.~Quillen, {\em Higher algebraic K-theory}. I, Algebraic K-theory, I: Higher K-theories (Proc. Conf., Battelle Memorial Inst., Seattle, Wash., 1972), Springer, Berlin, 1973, pp. 85--147. Lecture Notes in Math., vol. {\bf 341}.

\bibitem{RO} O.~R\"ondigs and P.~A.~{\O}stv{\ae}r, {\em Rigidity in motivic homotopy theory}. Math. Ann. {\bf 341} (2008), 651--675.

\bibitem{Suslin} A.~Suslin, {\em On the $K$-theory of algebraically closed fields}. Invent. Math. {\bf 73} (1983), no.~2, 241--245. 

\bibitem{Suslin-ICM} \bysame, {\em Algebraic $K$-theory of fields}. Proceedings of the International Congress of Mathematicians, Vol. 1, 2 (Berkeley, Calif., 1986), 222--244, AMS, Providence, RI, 1987. 

\bibitem{Klein} G. Tabuada, {\em $\bbA^1$-homotopy invariance of algebraic $K$-theory with coefficients and du Val singularities}. Ann. $K$-Theory {\bf 2} (2017), no. 1, 1--25. 

\bibitem{Hopf} \bysame, {\em Noncommutative mixed (Artin) motives and their motivic Hopf dg algebras}.
Selecta Mathematica {\bf 22} (2016), no.~2, 735--764.

\bibitem{book} \bysame, {\em Noncommutative Motives}. With a preface by Yuri I. Manin. University Lecture
Series, {\bf 63}. American Mathematical Society, Providence, RI, 2015.

\bibitem{Separable} G.~Tabuada and M.~Van den Bergh, {\em Noncommutative motives of separable algebras}.
Adv. Math. {\bf 303} (2016), 1122--1161.

\bibitem{YO} S.~Yagunov and P.~ {\O}stv{\ae}r, {\em Rigidity for equivariant $K$-theory}. C. R. Math. Acad. Sci. Paris {\bf 347} (2009), no. 23-24, 1403--1407. 

\end{thebibliography}
\end{document}

\end{proof}